%
%

\documentclass[11pt,reqno]{amsproc}
\usepackage{amsmath,amscd,amsthm,amssymb}
\usepackage{tikz-cd}

\newtheorem{theorem}{Theorem}[section]
\newtheorem{lemma}[theorem]{Lemma}
\newtheorem{proposition}[theorem]{Proposition}
\newtheorem{corollary}[theorem]{Corollary}

\theoremstyle{definition}

\newtheorem{definition}[theorem]{Definition}

\newtheorem{remark}[theorem]{Remark}

\newtheorem{example}[theorem]{Example}

\numberwithin{equation}{section}

\begin{document}

\title{The Lie groupoid analogue of a symplectic Lie group}

\author{David N. Pham}
\address{Department of Mathematics $\&$ Computer Science, QCC CUNY, Bayside, NY 11364}
\email{dpham90@gmail.com}
\thanks{This work was supported by PSC-CUNY Award $\#$ 60152-00 48.}

\subjclass[2010]{22A22, 53D05}

\keywords{symplectic Lie groups, Lie groupoids, symplectic Lie algebroids}

\date{}

\dedicatory{}

\begin{abstract}
A symplectic Lie group is a Lie group with a left-invariant symplectic form.  Its Lie algebra structure is that of a quasi-Frobenius Lie algebra.  In this note, we identify the groupoid analogue of a symplectic Lie group.  We call the aforementioned structure a \textit{$t$-symplectic Lie groupoid}; the ``$t$" is motivated by the fact that each target fiber of a $t$-symplectic Lie groupoid is a symplectic manifold.  For a Lie groupoid $\mathcal{G}\rightrightarrows M$, we show that there is a one-to-one correspondence between quasi-Frobenius Lie algebroid structures on $A\mathcal{G}$ (the associated Lie algebroid) and $t$-symplectic Lie groupoid structures on $\mathcal{G}\rightrightarrows M$.  In addition, we also introduce the notion of a \textit{symplectic Lie group bundle} (SLGB) which is a special case of both a $t$-symplectic Lie groupoid and a Lie group bundle.  The basic properties of SLGBs are explored. 
\end{abstract}

\maketitle
\section{Introduction}
A symplectic Lie group is a Lie group $G$ together with a left-invariant symplectic form $\omega$ \cite{Chu1974, BC2013}.  The associated Lie algebra structure is that of a quasi-Frobenius Lie algebra \cite{CP1994}; the latter is formally a Lie algebra $\frak{q}$ together with a skew-symmetric, non-degenerate bilinear form $\beta$ on $\frak{q}$ such that 
\begin{equation}
\nonumber
\beta([x,y],z)+\beta([y,z],x)+\beta([z,x],y)=0
\end{equation}
for all $x,y,z\in \frak{q}$.  In other words, $\beta$ is a non-degenerate 2-cocycle in the Lie algebra cohomology of $\frak{q}$ with values in $\mathbb{R}$ (where $\frak{q}$ acts trivially on $\mathbb{R}$).  For a symplectic Lie group $(G,\omega)$, the associated quasi-Frobenius Lie algebra is $(\frak{g},\omega_e)$, where $\frak{g}=T_eG$ is the Lie algebra defined by the left-invariant vector fields on $G$.

The notion of a \textit{quasi-Frobenius Lie algebroid} (or \textit{symplectic Lie algebroid} as it is more commonly called) was introduced independently in \cite{NT2001} and \cite{LMM2005}.  As one would expect, a quasi-Frobenius Lie algebroid over a point is simply a quasi-Frobenius Lie algebra.  As far as the author can tell, the Lie groupoid analogue of a symplectic Lie group has not been formally identified in the literature.  In other words, the following question has not yet been answered: \textit{what is the Lie groupoid structure whose assoicated Lie algebroid is precisely a quasi-Frobenius Lie algebroid?}  

To be clear, there is a structure in the literature called a \textit{symplectic Lie groupoid} \cite{Weinstein1987}.  However, it is unrelated to the notion of a symplectic Lie group.  Formally, a symplectic Lie groupoid is a Lie groupoid $\mathcal{G}\rightrightarrows M$ together with a symplectic form $\omega$ on $\mathcal{G}$ such that 
\begin{equation}
\nonumber
\mathcal{G}_3:=\{(g,h,gh)~|~(g,h)\in \mathcal{G}_2\}
\end{equation}
is a Lagrangian submanifold of $\mathcal{G}\times \mathcal{G}\times \overline{\mathcal{G}}$, where 
$\overline{\mathcal{G}}$ is the symplectic manifold $(\mathcal{G},-\omega)$ and
\begin{equation}
\nonumber
\mathcal{G}_2:=\{(g,h)~|~g,h\in \mathcal{G},~s(g)=t(h)\}.
\end{equation}
The condition that $\mathcal{G}_3$ is a Lagrangian submanifold of $\mathcal{G}\times \mathcal{G}\times \overline{\mathcal{G}}$ is equivalent to the condition that 
\begin{equation}
\label{SymplecticLieGroupoid1}
m^\ast\omega = \pi_1^\ast\omega+\pi_2^\ast\omega
\end{equation}
where $m: \mathcal{G}_2\longrightarrow \mathcal{G}$ denotes the multiplication map and $\pi_i: \mathcal{G}_2\longrightarrow \mathcal{G}$ denotes the natural projection map for $i=1,2$.   

Any Lie groupoid over a point is just a Lie group.  Hence, one might expect that a symplectic Lie groupoid over a point is just a symplectic Lie group, but this is not the case.  In fact, there are no symplectic Lie groupoids over a point.  To see this, let $\omega$ be a 2-form on a Lie group $G$ which satisfies (\ref{SymplecticLieGroupoid1}).  Let $g,h\in G$ and let
\begin{equation}
\nonumber
u:=(x,y),~v:=(x',y')\in T_g G\times T_h G.
\end{equation}
Then
\begin{equation}
\label{SymplecticLieGroupoid2}
(m^\ast\omega)_{(g,h)}(u,v)=\omega_{gh}((r_h)_{\ast} x+(l_g)_{\ast}y,(r_h)_{\ast} x'+(l_g)_{\ast}y')
\end{equation}
and
\begin{equation}
\label{SymplecticLieGroupoid3}
(\pi_1^\ast\omega)_{(g,h)}(u,v)+(\pi_2^\ast\omega)_{(g,h)}(u,v)=\omega_g(x,x')+\omega_h(y,y').
\end{equation}
Setting $h=e$, $x'=0_g$, and $y=0_e$ in (\ref{SymplecticLieGroupoid2}) and (\ref{SymplecticLieGroupoid3}) gives
\begin{equation}
\label{SymplecticLieGroupoid4}
(m^\ast\omega)_{(g,e)}(u,v)=\omega_{g}(x,(l_g)_{\ast}y')
\end{equation}
and 
\begin{equation}
\label{SymplecticLieGroupoid5}
(\pi_1^\ast\omega)_{(g,e)}(u,v)+(\pi_2^\ast\omega)_{(g,e)}(u,v)=0.
\end{equation}
Since $\omega$ satisfies (\ref{SymplecticLieGroupoid1}) by assumption, equations  (\ref{SymplecticLieGroupoid4}) and  (\ref{SymplecticLieGroupoid5})  imply that $\omega_g\equiv 0$ for all $g\in G$.  This shows that for any Lie group $G$, there are no symplectic forms which satisfy (\ref{SymplecticLieGroupoid1}).  Hence, there are no symplectic Lie groupoids over a point.  

In this note, we will identify the groupoid analogue of a symplectic Lie group.  We call the aforementioned structure a \textit{$t$-symplectic Lie groupoid}; the ``$t$" is motivated by the fact that each target fiber of a $t$-symplectic Lie groupoid is a symplectic manifold.  For a Lie groupoid $\mathcal{G}\rightrightarrows M$, we show that there is a one-to-one correspondence between quasi-Frobenius Lie algebroid structures on $A\mathcal{G}$ (the associated Lie algebroid) and $t$-symplectic Lie groupoid structures on $\mathcal{G}\rightrightarrows M$.   In addition, we also introduce the notion of a \textit{symplectic Lie group bundle} (SLGB) which is a special case of both a $t$-symplectic Lie groupoid and a Lie group bundle \cite{McK1, McK2}.       

The rest of this paper is organized as follows.  In section 2, we give a brief review of Lie groupoids and Lie algebroids.  In section 3, we introduce $t$-symplectic Lie groupoids, and establish the aforementioned one-to-one correspondence.  Some basic examples of $t$-symplectic Lie groupoids are also presented.   We conclude the paper in section 4 by introducing SLGBs and exploring some of its basic properties.  In addition, we also prove a result which is useful for the construction of nontrivial SLGBs.

\section{Preliminaries}
\subsection{Lie groupoids $\&$ Lie algebroids}
\noindent In this section, we give a brief review of Lie groupoids and Lie algebroids \cite{McK1, McK2, Marle, Marle1}, mainly to establish the notation for the rest of the paper.  We begin with the following definition:
\begin{definition}
\label{LieGroupoidDef1}
A \textit{Lie groupoid} is a groupoid $\mathcal{G}\rightrightarrows M$ such that
\begin{itemize}
\item[(i)] $\mathcal{G}$ and $M$ are smooth manifolds
\item[(ii)] all structure maps are smooth
\item[(iii)] the source map $s:\mathcal{G}\longrightarrow M$ is a surjective submersion.
\end{itemize}
\end{definition}
\begin{remark}
Note that condition (iii) of Definition \ref{LieGroupoidDef1} is equivalent to the condition that the target map $t:\mathcal{G}\longrightarrow M$ is a surjective submersion.  

In addition, the axioms of a Lie groupoid imply that the unit map 
\begin{equation}
\nonumber
u: M\longrightarrow \mathcal{G}
\end{equation}
is a smooth embedding.  As a consequence of this, we will often view $M$ as an embedded submanifold of $\mathcal{G}$.  With this viewpoint, $u$ is simply the inclusion map. 

The domain of the multiplication map $m$ on $\mathcal{G}$ is typically denoted as
\begin{equation}
\nonumber
\mathcal{G}_2:=\{(g,h)~|~g,h\in \mathcal{G},~s(g)=t(h)\}.
\end{equation}
Give $(g,h)\in \mathcal{G}_2$, we set $gh:=m(g,h)$.
\end{remark}

\begin{definition}
\label{LieGrpdMorph}
Let $\mathcal{G}\rightrightarrows M$ and $\mathcal{H}\rightrightarrows N$ be Lie groupoids.  Let  $(s,t)$ and ($s',t'$) denote the source and target maps of  $\mathcal{G}\rightrightarrows M$ and $\mathcal{H}\rightrightarrows N$ respectively.  Also, let $u$ and $u'$ denote the respective unit maps.   A homomorphism from $\mathcal{G}\rightrightarrows M$ to $\mathcal{H}\rightrightarrows N$ is a pair of smooth maps $F: \mathcal{G}\rightarrow \mathcal{H}$ and $f:M\rightarrow N$ such that 
\begin{itemize}
\item[(i)] $F(gh)=F(g)F(h)$ for all $(g,h)\in \mathcal{G}_2$
\item[(ii)] $F\circ u = u'\circ f$
\item[(iii)] $s'\circ F=f\circ s$, $t'\circ F=f\circ t$
\end{itemize}
\end{definition}

\begin{example}
Any Lie group is naturally a Lie groupoid over a point.
\end{example}

\begin{example}
Associated to any manifold $M$ is the \textit{pair groupoid} $M\times M\rightrightarrows M$ whose structure maps are defined as follows:
\begin{align}
\nonumber
s(p,q)&:=q,\hspace*{0.1in} t(p,q):=p,\hspace*{0.1in} (p,q)(q,r):=(p,r)\\
\nonumber
&u(p):=(p,p),\hspace*{0.1in} i(p,q):=(q,p)
\end{align}
for $p,q,r\in M$.
\end{example}

\begin{example}
Let $M$ be a manifold with a smooth left-action by a Lie group $G$.  Associated to $(M,G)$ is the \textit{action groupoid} $G\times M\rightrightarrows M$ whose structure maps are defined as follows:
\begin{align}
\nonumber
s(g,p):=g^{-1}p&,\hspace*{0.1in} t(g,p):=p,\hspace*{0.1in} (g,p)(h,g^{-1}p):=(gh,p)\\
\nonumber
u(p):&=(e,p),\hspace*{0.1in} i(g,p):=(g^{-1},g^{-1}p).
\end{align}
for $g,h\in G$, $p\in M$.
\end{example}

\begin{definition}
\label{LieAlgebroidDef}
A \textit{Lie algebroid} is a triple $(A,\rho,M)$ where $A$ is a vector bundle over $M$ and $\rho: A\rightarrow TM$ is a vector bundle map called the \textit{anchor} such that 
\begin{itemize}
\item[(i)] $\Gamma(A)$ is a Lie algebra.
\item[(ii)] For $X,Y\in \Gamma(A)$ and $f\in C^\infty(M)$, the Lie bracket on $\Gamma(A)$ satisfies the following Leibniz-type rule:
\begin{equation}
\nonumber
[X,fY]=f[X,Y]+(\rho(X)f)Y.
\end{equation}
\end{itemize}
\end{definition}
\begin{proposition}
Let $(A,\rho,M)$ be a Lie algebroid.  Then
\begin{itemize}
\item[(i)] $\rho: \Gamma(A)\rightarrow \Gamma(TM)$ is a Lie algebra map, where the Lie bracket on $\Gamma(TM)$ is just the usual Lie bracket of vector fields.
\item[(ii)] $[fX,Y]=f[X,Y]-(\rho(Y)f)X$ for all $X,Y\in \Gamma(A)$, $f\in C^\infty(M)$.
\end{itemize}
\end{proposition}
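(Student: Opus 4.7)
The plan is to establish (ii) first, since it is a one-line consequence of the antisymmetry of the Lie bracket on $\Gamma(A)$ combined with the Leibniz rule from Definition \ref{LieAlgebroidDef}(ii), and then tackle (i) using the standard ``two expansions of $[X,[Y,fZ]]$'' technique. For (ii), I would simply write $[fX,Y] = -[Y,fX] = -f[Y,X] - (\rho(Y)f)X$ by the Leibniz rule, and then apply antisymmetry once more to the first term to obtain $f[X,Y] - (\rho(Y)f)X$.

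For (i), I would compute $[X,[Y,fZ]]$ for arbitrary $X,Y,Z \in \Gamma(A)$ and $f\in C^\infty(M)$ in two different ways. First, by iterating the Leibniz rule: expand the inner bracket as $f[Y,Z] + (\rho(Y)f)Z$ and then expand $[X,\cdot]$ of the result; this produces, among other terms, a contribution of $(\rho(X)\rho(Y)f)Z$. Second, by using the Jacobi identity $[X,[Y,fZ]] = [[X,Y],fZ] + [Y,[X,fZ]]$ and applying the Leibniz rule to each bracket on the right; this produces contributions involving $(\rho([X,Y])f)Z$ and $(\rho(Y)\rho(X)f)Z$. All terms of the form $f \cdot (\text{bracket in }\Gamma(A))$ cancel by the Jacobi identity on $\Gamma(A)$ itself, as do the mixed terms of the form $(\rho(X)f)[Y,Z]$ and $(\rho(Y)f)[X,Z]$; what remains, after rearrangement, is
\begin{equation}
\nonumber
\big(\rho([X,Y])f\big)Z \;=\; \big([\rho(X),\rho(Y)]f\big)Z.
\end{equation}

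The main obstacle is then passing from this $Z$-weighted identity to the pointwise equality $\rho([X,Y])_p = [\rho(X),\rho(Y)]_p$ at each $p\in M$. For a given $p$, I would choose a section $Z$ of $A$ with $Z_p \neq 0$, which exists by local triviality of $A$; evaluating the above identity at $p$ then gives $\big(\rho([X,Y])f\big)(p) = \big([\rho(X),\rho(Y)]f\big)(p)$ for every $f \in C^\infty(M)$, and since tangent vectors at $p$ are determined by their derivations on smooth functions, this yields the desired equality of vector fields. I expect no serious conceptual obstacles beyond careful bookkeeping in the double expansion of $[X,[Y,fZ]]$ and the observation that the apparent non-linearity of $\rho$ on the left is precisely matched on the right by the failure of $[\rho(X),\rho(Y)]$ to be $C^\infty(M)$-linear.
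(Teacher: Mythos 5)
Your proposal is correct. Part (ii) is exactly the intended ``direct calculation'': antisymmetry plus the Leibniz axiom gives $[fX,Y]=-[Y,fX]=-f[Y,X]-(\rho(Y)f)X=f[X,Y]-(\rho(Y)f)X$. For part (i), the double expansion of $[X,[Y,fZ]]$ works as you describe: after cancelling the $f$-weighted terms via the Jacobi identity on $\Gamma(A)$ and the two mixed terms $(\rho(X)f)[Y,Z]$ and $(\rho(Y)f)[X,Z]$, one is left with $\bigl(\rho([X,Y])f\bigr)Z=\bigl([\rho(X),\rho(Y)]f\bigr)Z$, and your device of evaluating at $p$ against a section $Z$ with $Z_p\neq 0$ legitimately upgrades this to the pointwise identity of derivations, hence of vector fields. (The only degenerate case, a rank-zero bundle where no such $Z$ exists, is vacuous since then $\Gamma(A)=0$.)

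The comparison with the paper is somewhat one-sided: the paper does not actually prove the proposition, but simply cites Lemma 8.1.4 of Dufour--Zung for (i) and declares (ii) a direct calculation. Your argument is the standard one --- essentially the proof found in the cited reference --- so you have supplied the details the paper omits rather than taken a genuinely different route. The one point worth being explicit about in a written-up version is the passage from the $Z$-weighted identity to the equality of vector fields, which you handle correctly; this is the only step that is more than bookkeeping.
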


\begin{proof}
(i): See Lemma 8.1.4 of \cite{DZ}.

(ii): Direct calculation.
\end{proof}

\begin{definition}
\label{LieAlgebroidMorph}
Let $(A,\rho,M)$ and $(A',\rho',M)$ be Lie algebroids over the same base space $M$.  A Lie algebroid homomorphism from $(A,\rho,M)$ to $(A',\rho',M)$ is a vector bundle map $\varphi: A\rightarrow A'$ such that 
\begin{itemize}
\item[(i)] $\varphi: \Gamma(A)\rightarrow \Gamma(A')$ is a Lie algebra map,
\item[(ii)] $\rho'\circ \varphi=\rho$.
\end{itemize}
\end{definition}

Every Lie groupoid $\mathcal{G}\rightrightarrows M$ has an associated Lie algebroid $(A\mathcal{G},\rho,M)$ which arises by considering the Lie algebra of left-invariant vector fields on $\mathcal{G}$. 
\begin{definition}
\label{LeftInvVecField}
Let $\mathcal{G}\rightrightarrows M$ be a Lie groupoid.  A vector field $\widetilde{X}$ on $\mathcal{G}$ is \textit{left-invariant} if 
\begin{equation}
\nonumber
(l_g)_{\ast,h} \widetilde{X}_h=\widetilde{X}_{gh}
\end{equation}
for all $g,h\in \mathcal{G}$, where $s(g)=t(h)$ and 
\begin{equation}
\nonumber
l_g: t^{-1}(s(g))\longrightarrow t^{-1}(t(g))
\end{equation}
is left multiplication by $g$.
\end{definition}
\noindent Let 
\begin{equation}
\nonumber
T^t\mathcal{G}:=\ker t_\ast\subset T\mathcal{G}.
\end{equation}
Since $t:\mathcal{G}\longrightarrow M$ is a surjective submersion, it follows that $T^t\mathcal{G}$ is a smooth sub-bundle of $T\mathcal{G}$.  In addition, define
\begin{equation}
\nonumber
A\mathcal{G}:=T^t\mathcal{G}|_{M},
\end{equation}
where we recall that $M$ is identified with the embedded submanifold of $\mathcal{G}$ consisting of the unit elements.  Let $\frak{X}_l(\mathcal{G})$ denote the left-invariant vector fields of $\mathcal{G}$.  It can be shown that $\frak{X}_l(\mathcal{G})$ is closed under the ordinary Lie bracket of vector fields on $\mathcal{G}$.  Consequently, $\frak{X}_l(\mathcal{G})$ is a Lie algebra itself.  Definition \ref{LeftInvVecField} implies that the map
\begin{equation}
\nonumber
\frak{X}_l(\mathcal{G})\longrightarrow \Gamma(A\mathcal{G}),\hspace*{0.1in} \widetilde{X}\mapsto \widetilde{X}|_M
\end{equation}
is a vector space isomorphism.   The inverse map sends a section $X\in \Gamma(A\mathcal{G})$ to the left-invariant vector field $\widetilde{X}$ on $\mathcal{G}$ defined by
\begin{equation}
\nonumber
(l_g)_{\ast,s(g)} X_{s(g)}=\widetilde{X}_{g}.
\end{equation}

\begin{theorem}
\label{LieAlgebroidOfLieGroupoid}
Let $\mathcal{G}\rightrightarrows M$ be a Lie groupoid.  For $X,Y\in \Gamma(A\mathcal{G})$, define
\begin{equation}
\nonumber
[X,Y]:=[\widetilde{X},\widetilde{Y}]|_M,
\end{equation}
where $\widetilde{X}$, $\widetilde{Y}$ are the left-invariant vector fields associated to $X$ and $Y$ respectively.  Also, let
\begin{equation}
\nonumber
\rho:=s_{\ast}|_{A\mathcal{G}},
\end{equation}
where  $s:\mathcal{G}\longrightarrow M$ is the source map.  Then $(A\mathcal{G},\rho,M)$ is a Lie algebroid.
\end{theorem}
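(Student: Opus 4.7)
The plan is to verify, in order: (a) that the bracket $[X,Y]:=[\widetilde{X},\widetilde{Y}]|_M$ lands in $\Gamma(A\mathcal{G})$, (b) the Lie algebra axioms for this bracket, (c) that $\rho = s_\ast|_{A\mathcal{G}}$ is a bona fide vector bundle map $A\mathcal{G}\to TM$, and (d) the Leibniz-type rule of Definition \ref{LieAlgebroidDef}. Throughout I would exploit the vector space isomorphism $\Gamma(A\mathcal{G})\cong \frak{X}_l(\mathcal{G})$ established just above the theorem, transferring structure from the Lie algebra of vector fields on $\mathcal{G}$.

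For (a), the two key facts to establish are that $[\widetilde{X},\widetilde{Y}]$ is (i) tangent to the target fibers and (ii) left-invariant. Fact (i) I would prove by characterizing $\Gamma(T^t\mathcal{G})$ as the vector fields annihilating $t^\ast C^\infty(M)$: since $\widetilde{X}(t^\ast f) = \widetilde{Y}(t^\ast f) = 0$ for every $f\in C^\infty(M)$, the same holds for $[\widetilde{X},\widetilde{Y}](t^\ast f)$. Fact (ii) requires more care, as $l_g$ is only a diffeomorphism between the target fibers $t^{-1}(s(g))$ and $t^{-1}(t(g))$, not globally on $\mathcal{G}$. My approach is to restrict $\widetilde{X},\widetilde{Y}$ to each target fiber (where, thanks to (i), they are honest vector fields), invoke the naturality of the Lie bracket of vector fields under the diffeomorphism $l_g$, and conclude that $[\widetilde{X},\widetilde{Y}]$ is left-invariant. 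Together (i) and (ii) guarantee that $[\widetilde{X},\widetilde{Y}]|_M$ is a smooth section of $A\mathcal{G}$.

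For (b), antisymmetry is immediate. For the Jacobi identity, I would observe that left-invariance of $[\widetilde{X},\widetilde{Y}]$ forces the equality $[\widetilde{X},\widetilde{Y}] = \widetilde{[X,Y]}$, so the Jacobi identity on $\Gamma(A\mathcal{G})$ is precisely the restriction to $M$ of the Jacobi identity for vector fields on $\mathcal{G}$. For (c), $s\circ u = \mathrm{id}_M$ implies $s_\ast$ maps $T_p\mathcal{G}$ into $T_pM$ for $p\in M$, so $\rho$ is a bundle map of the right type. For (d), the crucial identity is $\widetilde{fY} = (s^\ast f)\,\widetilde{Y}$, which follows directly from the defining formula for the left-invariant extension since the scalar $f(s(g))$ commutes with $(l_g)_\ast$. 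Feeding this into $[\widetilde{X},(s^\ast f)\widetilde{Y}]$, applying the ordinary Leibniz rule for vector fields on $\mathcal{G}$, and then restricting to $M$ using $s|_M = \mathrm{id}_M$ and $\widetilde{X}_p(s^\ast f) = s_\ast(X_p)(f) = (\rho(X)f)(p)$ produces the required identity.

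The main obstacle will be the left-invariance step in (a), because left translation on a Lie groupoid is only a partially defined operation between target fibers. One must phrase the naturality of the Lie bracket carefully in this partial setting, typically by reducing to vector fields on a single target fiber (a genuine manifold) where $l_g$ becomes an honest diffeomorphism and the standard naturality statement applies verbatim.
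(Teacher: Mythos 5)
The paper does not actually prove Theorem \ref{LieAlgebroidOfLieGroupoid}: it is quoted as a standard result, with the construction deferred to the references \cite{McK1, McK2, Marle, Marle1}. So there is no in-paper argument to compare yours against; I can only assess your outline on its own terms, and it is correct and complete as a blueprint of the standard proof. Your steps (a)--(d) cover everything that needs checking, and the two points where a blind attempt usually goes wrong are both handled properly: you close the bracket inside $\Gamma(T^t\mathcal{G})$ by the derivation argument on $t^\ast C^\infty(M)$, and you prove left-invariance of $[\widetilde{X},\widetilde{Y}]$ by restricting to a single target fiber, where $l_g$ is an honest diffeomorphism and naturality of the Lie bracket applies. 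The only step you should make explicit when writing this up is the one you are implicitly using there: since $t^{-1}(p)$ is an embedded submanifold of $\mathcal{G}$ and $\widetilde{X},\widetilde{Y}$ are tangent to it, the restriction of $[\widetilde{X},\widetilde{Y}]$ to $t^{-1}(p)$ coincides with the Lie bracket of the restrictions computed intrinsically on $t^{-1}(p)$; this is what lets you pass between the global bracket on $\mathcal{G}$ and the fiberwise brackets to which naturality is applied. Your identities $\widetilde{fY}=(s^\ast f)\widetilde{Y}$ and $\widetilde{X}(s^\ast f)=s^\ast(\rho(X)f)$ (the latter via $s\circ l_g=s|_{t^{-1}(s(g))}$) are exactly the computations the paper itself carries out in the proof of Proposition \ref{LeftInvCorr}, so your approach is consistent with the paper's later machinery.
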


\begin{proposition}
\label{InducedLieAlgebroidMorph}
Let $\mathcal{G}\rightrightarrows M$ and $\mathcal{H}\rightrightarrows M$ be Lie groupods and let $\varphi: \mathcal{G}\rightarrow \mathcal{H}$ be a Lie groupoid homomorphism, where the morphism on the base space is $id_M$.  Let $\widehat{\varphi}:=\varphi_\ast|_{A\mathcal{G}}$.  Then $\widehat{\varphi}: A\mathcal{G}\rightarrow A\mathcal{H}$ is a Lie algebroid homomorphism.  
\end{proposition}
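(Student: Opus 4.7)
The plan is to verify the three pieces of structure required: (a) $\widehat{\varphi}$ really maps $A\mathcal{G}$ into $A\mathcal{H}$, (b) it intertwines the anchors, and (c) the induced map on sections preserves Lie brackets.

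For (a), since the base map is $id_M$, condition (ii) of Definition \ref{LieGrpdMorph} gives $\varphi\circ u=u'$, so $\varphi$ restricts to the identity on $M\subset \mathcal{G}$ and $\widehat{\varphi}$ is defined fibrewise over $M$. From the other axiom $t'\circ \varphi=t$ in condition (iii), differentiating gives $t'_\ast\circ \varphi_\ast=t_\ast$, so $\varphi_\ast$ carries $\ker t_\ast|_M=A\mathcal{G}$ into $\ker t'_\ast|_M=A\mathcal{H}$. For (b), the identity $s'\circ \varphi=s$ from condition (iii) immediately yields $\rho'\circ \widehat{\varphi}=s'_\ast\circ \varphi_\ast=(s'\circ \varphi)_\ast=s_\ast=\rho$ on $A\mathcal{G}$.

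The substantive step is (c). The key claim is that for each $X\in \Gamma(A\mathcal{G})$, the left-invariant vector field $\widetilde{X}$ on $\mathcal{G}$ is $\varphi$-related to the left-invariant vector field $\widetilde{\widehat{\varphi}(X)}$ on $\mathcal{H}$. To see this, observe that condition (i) of Definition \ref{LieGrpdMorph} together with $\varphi\circ u=u'$ yields the conjugation identity $\varphi\circ l_g=l_{\varphi(g)}\circ \varphi$ on the $t$-fibre $t^{-1}(s(g))$, valid because $t'(\varphi(h))=t(h)=s(g)=s'(\varphi(g))$ ensures that $l_{\varphi(g)}\circ \varphi$ is defined there. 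Differentiating this identity at the unit $u(s(g))$ and evaluating on $X_{s(g)}\in A\mathcal{G}|_{s(g)}$ gives
\begin{equation}
\nonumber
\varphi_\ast \widetilde{X}_g=\varphi_\ast (l_g)_\ast X_{s(g)}=(l_{\varphi(g)})_\ast\, \widehat{\varphi}(X_{s(g)})=\widetilde{\widehat{\varphi}(X)}_{\varphi(g)}.
\end{equation}

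Once $\varphi$-relatedness is established, the naturality of the Lie bracket under pushforward shows that $[\widetilde{X},\widetilde{Y}]$ on $\mathcal{G}$ is $\varphi$-related to $[\widetilde{\widehat{\varphi}(X)},\widetilde{\widehat{\varphi}(Y)}]$ on $\mathcal{H}$. Restricting both sides to $M$ and using $\varphi|_M=id_M$ then produces the identity $\widehat{\varphi}([X,Y])=[\widehat{\varphi}(X),\widehat{\varphi}(Y)]$, which is exactly condition (i) of Definition \ref{LieAlgebroidMorph}. The only genuine obstacle is the bookkeeping around domains: $l_g$ and $l_{\varphi(g)}$ are only defined on the appropriate $t$-fibres, so one must confirm the conjugation identity on a common domain before differentiating. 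Everything else is a direct unwinding of the constructions of $A\mathcal{G}$, $\widetilde{X}$, and the Lie groupoid homomorphism axioms.
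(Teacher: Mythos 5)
Your proof is correct. The paper actually states Proposition \ref{InducedLieAlgebroidMorph} without proof, so there is nothing to compare against; your argument is the standard one, and all three steps check out: the identities $t'\circ\varphi=t$ and $s'\circ\varphi=s$ (condition (iii) of Definition \ref{LieGrpdMorph} with $f=id_M$) give that $\widehat{\varphi}$ is a well-defined bundle map intertwining the anchors, and the conjugation identity $\varphi\circ l_g=l_{\varphi(g)}\circ\varphi$ on $t^{-1}(s(g))$ correctly establishes that $\widetilde{X}$ is $\varphi$-related to $\widetilde{\widehat{\varphi}(X)}$, after which naturality of the bracket under related vector fields and restriction to $M$ (where $\varphi$ is the identity) yields $\widehat{\varphi}([X,Y])=[\widehat{\varphi}(X),\widehat{\varphi}(Y)]$.
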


\begin{example}
Any Lie algebra $\frak{g}$ is naturally a Lie algebroid over a point.  Specifically, the Lie algebra structure on $\Gamma(\frak{g})$ is induced by that of $\frak{g}$ under the natural vector space isomorphism $\Gamma(\frak{g})\simeq \frak{g}$, and the anchor map of $\frak{g}$ is (necessarily) the zero map.
\end{example}

\begin{example}
The tangent bundle $TM$ of a manifold $M$ is naturally a Lie algebroid where the Lie bracket on $\Gamma(TM)$ is just the usual Lie bracket of vector fields on $M$, and the anchor map is just the identity map $\rho:=id_{TM}$.  $(TM,id_{TM},M)$ is called the \textit{tangent algebroid}.

A direct calculation shows that the tangent algebroid is the associated Lie algebroid of the pair groupoid $M\times M\rightrightarrows M$.
\end{example}

\begin{example}
Let 
\begin{equation}
\nonumber
\psi: \frak{g}\longrightarrow \Gamma(TM),\hspace*{0.1in} x\mapsto x_M:=\psi(x)\in \Gamma(TM)
\end{equation}
be an action of a Lie algebra $\frak{g}$ on a manifold $M$, that is, $\psi$ is a Lie algebra homomorphism.  Consider the trivial vector bundle 
\begin{equation}
\nonumber
\frak{g}\times M\rightarrow M.
\end{equation}
The sections of $\frak{g}\times M$ are naturally identified with smooth $\frak{g}$-valued functions on $M$.  Given two smooth functions $\phi, \tau: M\longrightarrow \frak{g}$, define
\begin{equation}
\label{ActionAlgebroidBracket}
[\phi,\tau](p):=[\phi(p),\tau(p)]+(\phi(p)_M)_p\tau - (\tau(p)_M)_p\phi
\end{equation}
for all $p\in M$, where $[\phi(p),\tau(p)]$ is understood to be the Lie bracket of $\phi(p),\tau(p)\in \frak{g}$ on $\frak{g}$.  Also, define 
\begin{equation}
\label{ActionAlgebroidAnchor}
\rho: \frak{g}\times M\longrightarrow TM,\hspace*{0.1in} (x,p)\mapsto (x_M)_p\in T_pM.
\end{equation}
Then $\frak{g}\times M$ is a Lie algebroid with bracket given by (\ref{ActionAlgebroidBracket}) and anchor map given by (\ref{ActionAlgebroidAnchor}).  $(\frak{g}\times M,\rho,M)$ is called the \textit{action algebroid}.

Now let $G$ be a Lie group whose Lie algebra is $\frak{g}$ and suppose that $M$ has a smooth left-action by $G$.  The $G$-action on $M$ induces an action of $\frak{g}$ on $M$ which sends $x\in \frak{g}$ to the vector field $x_M$ on $M$ given by
\begin{equation}
\label{Induced-g-action}
(x_M)_p:=\frac{d}{dt}|_{t=0}\mbox{exp}(-tx)p\in T_pM.
\end{equation}
The action algebroid given by the $\frak{g}$-action of (\ref{Induced-g-action}) coincides with the associated Lie algebroid of the action groupoid $G\times M\rightrightarrows M$.
\end{example}

\subsection{The exterior derivative of a Lie algebroid}

Every Lie algebroid $(A,\rho,M)$ has an \textit{exterior derivative} 
\begin{equation}
\nonumber
d_A: \Gamma(\wedge^k A^\ast)\longrightarrow \Gamma(\wedge^{k+1} A^\ast),
\end{equation}
which is analogous to the usual exterior derivative of differential forms.  Formally, $d_A$ is defined by
\begin{align}
\nonumber
(d_A\omega)(X_1,\dots, X_{k+1})=&\sum_{i=1}^{k+1}(-1)^{i+1}\rho(X_i)[\omega(X_1,\dots, \widehat{X}_i,\dots,X_{k+1})]\\
\label{dADef}
&+\sum_{i<j}(-1)^{i+j}\omega([X_i,X_j],X_1,\dots, \widehat{X}_i,\dots,\widehat{X}_j,\dots, X_{k+1})
\end{align}
for $\omega\in \Gamma(\wedge^k A^\ast)$,  $X_i\in \Gamma(A)$, $i=1,\dots, k+1$, where $ \widehat{X}_i$ denotes omission of $X_i$.  A direct calculation shows that 
\begin{equation}
d_A^2=0.
\end{equation}

\begin{example}
The exterior derivative $d_{TM}$ associated to the tangent algebroid $(TM,id_{TM}, M)$ is just the ordinary exterior derivative of differential forms on $M$.
\end{example}

\begin{example}
For a Lie algebra $\frak{g}$, the exterior derivative $d_{\frak{g}}$ associated to its natural Lie algebroid structure is given explicitly by
\begin{align}
\nonumber
(d_{\frak{g}}\omega)&(x_1,\dots,x_{k+1})\\
\nonumber
&=\sum_{i<j}(-1)^{i+j}\omega([x_i,x_j],x_1,\dots, \widehat{x}_i,\dots,\widehat{x}_j,\dots, x_{k+1}),
\end{align} 
for $\omega\in \wedge^k \frak{g}^\ast$, $x_1,\dots, x_{k+1}\in \frak{g}$.  From this, one sees that $d_{\frak{g}}$ is just the coboundary map in the Lie algebra cohomology of $\frak{g}$ with values in $\mathbb{R}$, where $\frak{g}$ acts trivially on $\mathbb{R}$.  
\end{example}

\subsection{quasi-Frobenius Lie algebroids}
As mentioned previously, a symplectic Lie algebroid over a point is a symplectic Lie algebra (or quasi-Frobenius Lie algebra as it is also called).   However, the name \textit{symplectic Lie algebra} also has a different meaning.  It also refers to $\frak{sp}(2n,\mathbb{R})$, the Lie algebra of the Lie group of $2n\times 2n$ symplectic matrices.  For this reason, we prefer to use the name \textit{quasi-Frobenius Lie algebroids} in place of symplectic Lie algebroids.  Formally, a quasi-Frobenius Lie algebroid is defined as follows: 
\begin{definition}
A \textit{quasi-Frobenius Lie algebroid} is a Lie algebroid $(A,\rho,M)$ together with a non-degenerate 2-cocycle $\omega$ in the Lie algebroid cohomology of $(A,\rho,M)$, that is, $\omega\in \Gamma(\wedge^2A^\ast)$ such that 
\begin{itemize}
\item[(i)] $\omega$ is nondegenerate
\item[(ii)] $d_A \omega = 0$.
\end{itemize}
Furthermore, if there exists $\theta\in \Gamma(A^\ast)$ such that $\omega=d_A\theta$, then $(A,\rho,M,\theta)$ is called a \textit{Frobenius Lie algebroid}. 
\end{definition}

\begin{example}
Let  $(\frak{q},\beta)$ be a quasi-Frobenius Lie algebra, that is, $\frak{q}$ is a Lie algebra and $\beta\in \wedge^2\frak{q}^\ast$ is a nondegenerate 2-cocycle in the Lie algebra cohomology of $\frak{q}$ with values in $\mathbb{R}$ (where $\frak{q}$ acts trivially on $\mathbb{R}$).  Let $d_{\frak{q}}$ denote the exterior derivative from the natural Lie algebroid structure on $\frak{q}$.  As noted previously, $d_{\frak{q}}$ coincides with the coboundary map in the Lie algebra cohomology of $\frak{q}$ with values in $\mathbb{R}$.  Hence, $d_{\frak{q}}\beta=0$.  Equipping $\frak{q}$ with its natural Lie algebroid structure, it follows that $(\frak{q},\beta)$ is naturally a quasi-Frobenius Lie algebroid over a point.
\end{example}

\begin{example}
Let $(M,\omega)$ be a symplectic manifold and let 
\begin{equation}
\nonumber
(TM,id_{TM},M)
\end{equation}
denote the tangent algebroid.  As noted previously, $d_{TM}=d$ where $d$ is the usual exterior derivative of differential forms on $M$.  From this, it follows that $(TM,id_{TM},M)$ together with $\omega$ is a quasi-Frobenius Lie algebroid over $M$.
\end{example}

\section{$t$-symplectic Lie groupoids}
In this section, we identify the Lie groupoid analogue of a symplectic Lie group.  To start, recall that a symplectic Lie group is a Lie group $G$ together with a left-invariant symplectic form $\omega$.  The condition of left-invariance simply means that 
\begin{equation}
\label{LeftInvarianceG}
l_g^\ast \omega=\omega,\hspace*{0.1in}\forall ~g\in G,
\end{equation}  
where $l_g: G\longrightarrow G$ is left translation by $g\in G$.  For a Lie groupoid $\mathcal{G}\rightrightarrows M$, where $M$ consists of more than one point, the condition of left-invariance given by equation (\ref{LeftInvarianceG}) is no longer applicable.  In other words, while the notion of left-invariant vector fields extends from Lie groups to Lie groupoids, the notion of left-invariant differential forms does not.   This is a consequence of the fact that multiplication on a groupoid $\mathcal{G}\rightrightarrows M$ is only partial whenever $M$ consists of more than one point.  

For $g\in \mathcal{G}$, the domain of $l_g$ is not $\mathcal{G}$.  Instead, one has 
\begin{equation}
\nonumber
l_g: t^{-1}(s(g))\stackrel{\sim}{\longrightarrow} t^{-1}(t(g))\hookrightarrow \mathcal{G},
\end{equation}
where $s$ and $t$ denote the source and target maps on $\mathcal{G}\rightrightarrows M$.  Consequently, if one starts with a differential form $\omega$ on $\mathcal{G}$, then the pullback $(l_g)^\ast\omega$ is now a differential form on the embedded submanifold $t^{-1}(s(g))$, rather than on $\mathcal{G}$.  

In the case of a Lie group $G$, every left-invariant $k$-form on $G$ is uniquely determined by some element in $\wedge^k \frak{g}^\ast$, where $\frak{g}$ is the Lie algebra of $G$.  Since $A\mathcal{G}$ is the analogue of $\frak{g}$ for a Lie groupoid $\mathcal{G}\rightrightarrows M$ and $\Gamma(\wedge^k\frak{g}^\ast)\simeq \wedge^k\frak{g}^\ast$, it is natural to take the Lie groupoid analogue of left-invariant $k$-forms on $\mathcal{G}\rightrightarrows M$ to be in one to one correspondence with the elements of $\Gamma(\wedge^k (A\mathcal{G})^\ast)$.  This motivates the following definition:
\begin{definition}
\label{LeftInvariance}
Let $\mathcal{G}\rightrightarrows M$ be a Lie groupoid.  A section 
\begin{equation}
\nonumber
\widetilde{\omega}\in \Gamma(\wedge^k(T^t\mathcal{G})^\ast)
\end{equation}
is \textit{left-invariant} if 
\begin{equation}
\nonumber
(l_g)^\ast(\widetilde{\omega}|_{t^{-1}(t(g))})=\widetilde{\omega}|_{t^{-1}(s(g))}
\end{equation}
for all $g\in \mathcal{G}$.
\end{definition}
\begin{remark}
\label{omegaP}
Recall that for $p\in M$ and $g\in t^{-1}(p)$,
\begin{equation}
\label{omegaPe1}
(T^t\mathcal{G})_g:=\ker~t_{\ast,g}=T_g~t^{-1}(p).
\end{equation}
This implies that $\widetilde{\omega}|_{t^{-1}(p)}$ in Definition \ref{LeftInvariance} is indeed a differential $k$-form on $t^{-1}(p)$.  
\end{remark}
\begin{remark}
Note that when $M$ is a point, that is, $\mathcal{G}$ is a Lie group, we have $T^t\mathcal{G}=T\mathcal{G}$, and Definition \ref{LeftInvariance} coincides with the usual notion of left-invariant differential forms on a Lie group.
\end{remark}
\noindent The next result justifies Definition \ref{LeftInvariance}. 
\begin{proposition}
\label{LeftInvCorr}
Let $\mathcal{G}\rightrightarrows M$ be a Lie groupoid and let $\Gamma_L(\wedge^k(T^t\mathcal{G})^\ast)$ denote the space of left-invariant sections of $\wedge^k(T^t\mathcal{G})^\ast$.  Define
\begin{equation}
\nonumber
\varphi: \Gamma_L(\wedge^k(T^t\mathcal{G})^\ast)\longrightarrow \Gamma(\wedge^k (A\mathcal{G})^\ast)
\end{equation}
by $\varphi(\widetilde{\omega}):=\widetilde{\omega}|_M$, where $p\in M$ is identified with its corresponding unit element $e_p\in \mathcal{G}$ and $A\mathcal{G}$ is the Lie algebroid of $\mathcal{G}\rightrightarrows M$.   Let $\widetilde{\omega}^{(p)}:=\widetilde{\omega}|_{t^{-1}(p)}$ and $\omega:=\varphi(\widetilde{\omega})$.  Then $\varphi$ is a vector space isomorphism and
\begin{equation}
\label{LeftInvCorre1}
[(d\widetilde{\omega}^{(p)})(\widetilde{X}_1,\dots, \widetilde{X}_{k+1})](g)=[(d_{A\mathcal{G}}\omega)(X_1,\dots, X_{k+1})](s(g)),
\end{equation}
for all $\widetilde{\omega}\in \Gamma_L(\wedge^k(T^t\mathcal{G})^\ast)$, $p\in M$, $g\in t^{-1}(p)$, and $X_i \in \Gamma(A\mathcal{G})$ for $i=1,\dots, k+1$, where $\widetilde{X}_i\in \Gamma(T^t\mathcal{G})$ is the left-invariant vector field associated to $X_i$.   
\end{proposition}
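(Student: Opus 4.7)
The plan is to prove Proposition \ref{LeftInvCorr} in two stages: first, construct an explicit inverse to $\varphi$ via left-translation to establish that it is a vector space isomorphism; second, verify the intertwining identity (\ref{LeftInvCorre1}) by applying Cartan's formula on each target fiber and comparing term by term with (\ref{dADef}).

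For the first stage, I would define $\psi: \Gamma(\wedge^k(A\mathcal{G})^\ast) \to \Gamma_L(\wedge^k(T^t\mathcal{G})^\ast)$ by
\[
\psi(\omega)_g(v_1, \ldots, v_k) := \omega_{s(g)}\bigl((l_{g^{-1}})_{\ast,g} v_1, \ldots, (l_{g^{-1}})_{\ast,g} v_k\bigr),
\]
for $g \in \mathcal{G}$ and $v_i \in (T^t\mathcal{G})_g$. The map $l_{g^{-1}}: t^{-1}(t(g)) \to t^{-1}(s(g))$ sends $g$ to $e_{s(g)}$, so the right-hand side is well-defined in $\wedge^k(A\mathcal{G})^\ast_{s(g)}$. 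Smoothness of $\psi(\omega)$ follows from smoothness of multiplication and inversion, while left-invariance follows from the identity $l_{(hg)^{-1}} \circ l_h = l_{g^{-1}}$. The relation $\varphi \circ \psi = \mathrm{id}$ is immediate since $l_{e_p}$ is the identity on $t^{-1}(p)$; conversely, $\psi \circ \varphi = \mathrm{id}$ reduces directly to the defining left-invariance property of $\widetilde{\omega}$ evaluated between $e_{s(g)}$ and $g$.

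For the second stage, I would rely on three pointwise identities. Unpacking the formula for $\psi$ together with $\widetilde{X}_{i,g} = (l_g)_{\ast,s(g)} X_{i,s(g)}$ yields
\[
\widetilde{\omega}(\widetilde{X}_1, \ldots, \widetilde{X}_k) = s^\ast\bigl[\omega(X_1, \ldots, X_k)\bigr]
\]
as smooth functions on $\mathcal{G}$. Moreover, the Lie bracket on $\Gamma(A\mathcal{G})$ is defined (via Theorem \ref{LieAlgebroidOfLieGroupoid}) so that $[\widetilde{X}_i, \widetilde{X}_j]$ is the left-invariant lift of $[X_i, X_j]$. Finally, because $s \circ l_g = s$ on $t^{-1}(s(g))$, the chain rule gives $s_\ast \widetilde{X}_{i,g} = \rho(X_i)_{s(g)}$. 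Since each $\widetilde{X}_i$ is tangent to the target fibers, its restriction to $t^{-1}(p)$ is a genuine vector field on that manifold, so Cartan's formula applies to $d\widetilde{\omega}^{(p)}$. Substituting the three identities above into each term of Cartan's formula then produces exactly (\ref{dADef}) evaluated at $s(g)$.

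The main obstacle I expect is not conceptual but rather bookkeeping: correctly tracking which target fiber each object lives in as $g$ varies, and verifying that $\psi(\omega)$ is smooth across fibers (as a section of $\wedge^k(T^t\mathcal{G})^\ast$) rather than merely fiberwise smooth. Once this is handled, both halves of the proposition reduce to direct term-by-term computations.
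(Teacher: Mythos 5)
Your proposal is correct and follows essentially the same route as the paper's proof: the inverse via left-translation $\omega_{s(g)}((l_{g^{-1}})_{\ast,g}\,\cdot\,)$ is exactly the paper's construction, and the verification of (\ref{LeftInvCorre1}) rests on the same three identities --- $\widetilde{\omega}(\widetilde{X}_1,\dots,\widetilde{X}_k)=s^\ast[\omega(X_1,\dots,X_k)]$, the bracket compatibility from Theorem \ref{LieAlgebroidOfLieGroupoid}, and $s\circ l_g=s|_{t^{-1}(s(g))}$ giving $s_\ast\widetilde{X}_{i,g}=\rho(X_i)_{s(g)}$ --- substituted into the intrinsic formula for $d$ on each target fiber. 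The only cosmetic difference is that you package bijectivity as a two-sided inverse where the paper argues injectivity and surjectivity separately.
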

\begin{proof}
The linearity of $\varphi$ is clear.  We now show that $\varphi$ is injective.  Let $X_i \in \Gamma(A\mathcal{G})$, $i=1,\dots, k$ be arbitrary sections of $A\mathcal{G}$ and let $\widetilde{X}_i$, $i=1,\dots, k$ denote the corresponding left-invariant vector fields on $\mathcal{G}$.  Let $p\in M$ and $g\in t^{-1}(p)$.  Note that by equation (\ref{omegaPe1}), the restriction of $\widetilde{X}_i$ to $t^{-1}(p)$ is a vector field on $t^{-1}(p)$.  Let $\widetilde{\omega}\in \Gamma_L(\wedge^k(T^t\mathcal{G})^\ast)$ and $\omega:=\widetilde{\omega}|_M$.  Using the left-invariance of $\widetilde{\omega}$, we have
\begin{align}
\nonumber
[\widetilde{\omega}^{(p)}(\widetilde{X}_1,\dots, \widetilde{X}_k)](g)&=\widetilde{\omega}^{(p)}_g((\widetilde{X}_1)_g,\dots, (\widetilde{X}_k)_g)\\
\nonumber
&=\widetilde{\omega}^{(p)}_g((l_g)_{\ast,s(g)}({X}_1)_{s(g)},\dots,(l_g)_{\ast,s(g)}({X}_k)_{s(g)})\\
\nonumber
&=(l_g^\ast\widetilde{\omega}^{(p)})_{s(g)}(({X}_1)_{s(g)},\dots,({X}_k)_{s(g)})\\
\nonumber
&=\widetilde{\omega}^{(s(g))}_{s(g)}(({X}_1)_{s(g)},\dots,({X}_k)_{s(g)})\\
\nonumber
&={\omega}_{s(g)}(({X}_1)_{s(g)},\dots,({X}_k)_{s(g)})\\
\label{LeftInvCorre2}
&=[\omega(X_1,\dots, X_k)](s(g)).
\end{align}
Equation (\ref{LeftInvCorre2}) can be rewritten more generally as 
\begin{equation}
\label{LeftInvCorre3}
\widetilde{\omega}(\widetilde{X}_1,\dots, \widetilde{X}_k)=s^\ast[\omega(X_1,\dots, X_k)],
\end{equation}
where we recall that $\widetilde{\omega}^{(p)}$ is just the restriction of $\widetilde{\omega}$ to $t^{-1}(p)$.   Equation (\ref{LeftInvCorre3}) implies that  $\widetilde{\omega}$ is uniquely determined by $\omega:=\widetilde{\omega}|_M\in \Gamma(\wedge^k(A\mathcal{G})^\ast)$.  Hence, $\varphi$ is injective.  On the other hand, if $\beta\in  \Gamma(\wedge^k (A\mathcal{G})^\ast)$, then one obtains an element $\widetilde{\beta}\in \Gamma_L(\wedge^k(T^t\mathcal{G})^\ast)$ by defining
\begin{equation}
\label{LeftInvCorre4}
\widetilde{\beta}_g(u_1,\dots, u_k):=\beta_{s(g)}((l_{g^{-1}})_{\ast,g}u_1,\dots, (l_{g^{-1}})_{\ast,g}u_k).
\end{equation}
for $g\in \mathcal{G}$, $u_1,\dots, u_k\in (T^t\mathcal{G})_g$.   From the definition, it follows that $\widetilde{\beta}|_M=\beta$.  Hence, $\varphi$ is also surjective which proves that $\varphi$ is an isomorphism.  

Next, let $X_{k+1}\in \Gamma(A\mathcal{G})$ and let $\widetilde{X}_{k+1}$ be the associated left-invariant vector field on $\mathcal{G}$.  Let $g\in \mathcal{G}$.  Then
\begin{align}
\nonumber
\left[\widetilde{X}_{k+1}(\widetilde{\omega}(\widetilde{X}_1,\dots, \widetilde{X}_k))\right](g)&=(\widetilde{X}_{k+1})_g(\widetilde{\omega}(\widetilde{X}_1,\dots, \widetilde{X}_k))\\
\nonumber
&=(\widetilde{X}_{k+1})_g \left(s^\ast[\omega(X_1,\dots, X_k)]\right)\\
\nonumber
&=((l_g)_{\ast,s(g)} (X_{k+1})_{s(g)})\left([\omega(X_1,\dots, X_k)]\circ s\right)\\
\nonumber
&= (X_{k+1})_{s(g)}\left([\omega(X_1,\dots, X_k)]\circ s\circ l_g\right)\\
\nonumber
&= (X_{k+1})_{s(g)}\left([\omega(X_1,\dots, X_k)]\circ  s\right)\\
\nonumber
&=s_{\ast,s(g)}((X_{k+1})_{s(g)})[\omega(X_1,\dots, X_k)]\\
\label{LeftInvCorre5}
&=\rho((X_{k+1})_{s(g)})[\omega(X_1,\dots, X_k)]
\end{align}
where the second equality follows from equation (\ref{LeftInvCorre3}), the fifth equality follows from the fact that $s\circ l_g=s|_{t^{-1}(s(g))}$, and the last equality follows from the fact that the anchor map associated to $A\mathcal{G}$ is $\rho=s_{\ast}|_{A\mathcal{G}}$.  Equation (\ref{LeftInvCorre5}) can be written more generally as
\begin{equation}
\label{LeftInvCorre6}
\widetilde{X}_{k+1}(\widetilde{\omega}(\widetilde{X}_1,\dots, \widetilde{X}_k))=s^{\ast}\left(\rho(X_{k+1})[\omega(X_1,\dots, X_k)] \right).
\end{equation}
Now let $p\in M$ and $g\in t^{-1}(p)$.  Then
\begin{align}
\nonumber
[(d\widetilde{\omega}^{(p)})(\widetilde{X}_{1},\dots, &\widetilde{X}_{k+1})](g)=\sum_{i=1}^{k+1}(-1)^{i+1}(\widetilde{X}_i)_g[\widetilde{\omega}^{(p)}(\widetilde{X}_1,\dots,\widehat{\widetilde{X}}_i,\dots, \widetilde{X}_{k+1})]\\
\nonumber
&+\sum_{i<j}(-1)^{i+j} [\widetilde{\omega}^{(p)}([\widetilde{X}_i,\widetilde{X}_j],\widetilde{X}_1,\dots, \widehat{\widetilde{X}}_i,\dots, \widehat{\widetilde{X}}_j,\dots, \widetilde{X}_{k+1})](g)\\
\nonumber
&=\sum_{i=1}^{k+1}(-1)^{i+1}(\widetilde{X}_i)_g[\widetilde{\omega}(\widetilde{X}_1,\dots,\widehat{\widetilde{X}}_i,\dots, \widetilde{X}_{k+1})]\\
\nonumber
&+\sum_{i<j}(-1)^{i+j} [\widetilde{\omega}([\widetilde{X}_i,\widetilde{X}_j],\widetilde{X}_1,\dots, \widehat{\widetilde{X}}_i,\dots, \widehat{\widetilde{X}}_j,\dots, \widetilde{X}_{k+1})](g)\\
\nonumber
&=\sum_{i=1}^{k+1}(-1)^{i+1}\rho((X_i)_{s(g)})[{\omega}({X}_1,\dots,\widehat{{X}}_i,\dots, {X}_{k+1})]\\
\nonumber
&+\sum_{i<j}(-1)^{i+j} [\omega([{X}_i,{X}_j],{X}_1,\dots, {\widehat{X}}_i,\dots,{\widehat{X}}_j,\dots, {X}_{k+1})](s(g))\\
\nonumber
&=[(d_{A\mathcal{G}}\omega)(X_1,\dots, X_{k+1})](s(g))
\end{align}
where the third equality follows from equations (\ref{LeftInvCorre3}) and (\ref{LeftInvCorre6}) and the fact that $[X_i,X_j]:=[\widetilde{X}_i,\widetilde{X}_j]|_M$.  This completes the proof.
\end{proof}

\noindent We now define the Lie groupoid analogue of a symplectic Lie group.  The motivation for this definition will become clear shortly.

\begin{definition}
\label{tSymplecticDef}
A \textit{$t$-symplectic Lie groupoid} is a Lie grouopid $\mathcal{G}\rightrightarrows M$ together with a left-invariant section $\widetilde{\omega}\in \Gamma(\wedge^2 (T^t\mathcal{G})^\ast)$ with the property that $(t^{-1}(p),\widetilde{\omega}|_{t^{-1}(p)})$ is a symplectic manifold for all $p\in M$.  The section $\widetilde{\omega}$ is called a \textit{$t$-symplectic form} on $\mathcal{G}\rightrightarrows M$.
\end{definition}
\noindent Here are some immediate consequences of Definition \ref{LeftInvariance} and Definition \ref{tSymplecticDef}:
\begin{corollary}
\label{tSymplecticCor}
Let $\mathcal{G}\rightrightarrows M$ be a Lie groupoid and let $\widetilde{\omega}\in \Gamma(\wedge^2(T^t\mathcal{G})^\ast)$.  Then $\widetilde{\omega}$ is a $t$-symplectic form iff $(t^{-1}(p),\widetilde{\omega}|_{t^{-1}(p)})$ is a symplectic manifold for all $p\in M$ and 
\begin{equation}
\nonumber
l_g: (t^{-1}(s(g)),\widetilde{\omega}|_{t^{-1}(s(g))})\stackrel{\sim}{\longrightarrow}  (t^{-1}(t(g)),\widetilde{\omega}|_{t^{-1}(t(g))})
\end{equation}   
is a symplectomorphism for all $g\in \mathcal{G}$.
\end{corollary}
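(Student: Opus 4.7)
The plan is essentially to unwind the two definitions involved. A $t$-symplectic form is, by Definition \ref{tSymplecticDef}, precisely a left-invariant section $\widetilde{\omega}\in\Gamma(\wedge^2(T^t\mathcal{G})^\ast)$ such that each $\widetilde{\omega}|_{t^{-1}(p)}$ is a symplectic form on $t^{-1}(p)$.  The ``each target fiber is symplectic'' clause already appears verbatim in the statement of the corollary, so the only substantive thing to check is that, in the presence of that clause, left-invariance in the sense of Definition \ref{LeftInvariance} is equivalent to each $l_g$ being a symplectomorphism.

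First I would record the standard fact that for every $g\in\mathcal{G}$ the left-translation map $l_g$ restricts to a diffeomorphism $l_g:t^{-1}(s(g))\stackrel{\sim}{\longrightarrow}t^{-1}(t(g))$, with smooth inverse $l_{g^{-1}}$; this is a direct consequence of the Lie groupoid axioms and is already implicit in Definition \ref{LeftInvVecField}.  Combined with Remark \ref{omegaP}, which identifies $(T^t\mathcal{G})_g$ with $T_g\, t^{-1}(t(g))$, this tells us that the restriction $\widetilde{\omega}|_{t^{-1}(p)}$ is a genuine differential $2$-form on $t^{-1}(p)$, and that $(l_g)^\ast(\widetilde{\omega}|_{t^{-1}(t(g))})$ is a $2$-form on $t^{-1}(s(g))$, so both sides of the relevant pullback equation live in the same space.

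Second, Definition \ref{LeftInvariance} specialised to $k=2$ reads
\begin{equation}
\nonumber
(l_g)^\ast\bigl(\widetilde{\omega}|_{t^{-1}(t(g))}\bigr)=\widetilde{\omega}|_{t^{-1}(s(g))},\qquad \forall\,g\in\mathcal{G}.
\end{equation}
Once each fiber has been equipped with a symplectic form as above, this is literally the defining equation for each $l_g$ to be a symplectomorphism from $(t^{-1}(s(g)),\widetilde{\omega}|_{t^{-1}(s(g))})$ to $(t^{-1}(t(g)),\widetilde{\omega}|_{t^{-1}(t(g))})$.  Thus left-invariance together with the fiberwise symplectic condition is equivalent to the two conditions stated in the corollary, and the biconditional follows at once in both directions.

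No serious obstacle is anticipated: the entire content of the corollary is a repackaging of Definition \ref{LeftInvariance} into the language of symplectomorphisms, using only the standard fact that $l_g$ restricts to a diffeomorphism between target fibers.
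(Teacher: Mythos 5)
Your proposal is correct and matches the paper's intent exactly: the paper offers no separate proof, presenting the corollary as an immediate consequence of Definitions \ref{LeftInvariance} and \ref{tSymplecticDef}, and your unwinding of the two definitions (left-invariance being literally the symplectomorphism condition once each fiber is symplectic) is precisely that argument made explicit.
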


\begin{corollary}
Let $(\mathcal{G}\rightrightarrows M,\widetilde{\omega})$ be a $t$-symplectic Lie groupoid.  Then $\dim \mathcal{G}-\dim M$ is even. 
\end{corollary}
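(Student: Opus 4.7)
The plan is to deduce the parity statement directly from the fact that each target fiber is a symplectic manifold, together with standard submersion dimension counting.

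First, I would pick any point $p\in M$ and look at the target fiber $t^{-1}(p)$. By Definition \ref{tSymplecticDef}, $(t^{-1}(p),\widetilde{\omega}|_{t^{-1}(p)})$ is a symplectic manifold, and it is well known that any symplectic manifold has even dimension (a non-degenerate alternating bilinear form on a real vector space forces the vector space to be even-dimensional, applied pointwise to the tangent spaces).

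Next, since $t:\mathcal{G}\longrightarrow M$ is a surjective submersion (part of the Lie groupoid axioms in Definition \ref{LieGroupoidDef1}), the fiber $t^{-1}(p)$ is an embedded submanifold of $\mathcal{G}$ of dimension $\dim\mathcal{G}-\dim M$. Combining this with the previous step gives that $\dim\mathcal{G}-\dim M$ is even, which is the desired conclusion.

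The argument is essentially a one-liner, so there is no real obstacle; the only thing worth noting is that the statement implicitly assumes $M$ (and hence $\mathcal{G}$) is connected, or at least that all connected components of $M$ have the same dimension, so that $\dim\mathcal{G}-\dim M$ is well-defined. If $M$ is not connected, the proof still applies component by component, yielding that on each component of $\mathcal{G}$ the relative dimension is even.
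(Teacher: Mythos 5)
Your proof is correct and is essentially identical to the paper's: both note that each target fiber is a symplectic (hence even-dimensional) manifold and that the submersion $t$ gives $\dim t^{-1}(p)=\dim\mathcal{G}-\dim M$. Your added remark about connected components is a reasonable clarification but does not change the argument.
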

\begin{proof}
Let $p\in M$.  By Definition \ref{tSymplecticDef}, $(t^{-1}(p),\widetilde{\omega}|_{t^{-1}(p)})$ is a symplectic manifold.  Hence $t^{-1}(p)$ is an even-dimensional manifold.  Since $t$ is a submersion, we have
\begin{equation}
\nonumber
\dim t^{-1}(p)=\dim \mathcal{G}-\dim M.
\end{equation}  
This completes the proof.
\end{proof}

\begin{theorem}
\label{qFLAThm1}
Let $\mathcal{G}\rightrightarrows M$ be a Lie groupoid.  There is a one-to-one correspondence between quasi-Frobenius Lie algebroid structures on $(A\mathcal{G},\rho,M)$ and $t$-symplectic Lie groupoid structures on $\mathcal{G}\rightrightarrows M$.  This correspondence is given as follows.  Let 
\begin{equation}
\nonumber
\varphi: \Gamma_L(\wedge^2(T^t\mathcal{G})^\ast)\stackrel{\sim}{\longrightarrow} \Gamma(\wedge^2(A\mathcal{G})^\ast),\hspace*{0.1in}\widetilde{\omega}\mapsto \widetilde{\omega}|_M
\end{equation}
be the vector space isomorphism of Proposition \ref{LeftInvCorr}.  Let $\widetilde{\omega}\in \Gamma_L(\wedge^2(T^t\mathcal{G})^\ast)$.  Then $(\mathcal{G}\rightrightarrows M,\widetilde{\omega})$ is a $t$-symplectic Lie groupoid iff $(A\mathcal{G},\rho,M,\varphi(\widetilde{\omega}))$ is a quasi-Frobenius Lie algebroid.
\end{theorem}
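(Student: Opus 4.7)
The plan is to unpack both sides of the equivalence into two constituent conditions --- nondegeneracy and closedness --- and check each under the isomorphism $\varphi$ from Proposition~\ref{LeftInvCorr}. Since $\varphi$ is already a bijection between $\Gamma_L(\wedge^2(T^t\mathcal{G})^\ast)$ and $\Gamma(\wedge^2(A\mathcal{G})^\ast)$, the content of the theorem is precisely that under $\varphi$ the two conditions defining a $t$-symplectic form correspond to the two conditions defining a quasi-Frobenius 2-cocycle on $A\mathcal{G}$. I would prove the two equivalences independently, writing $\omega := \varphi(\widetilde{\omega})$ throughout.

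For nondegeneracy, the starting observation is that $\omega_p = \widetilde{\omega}_{e_p}$ as bilinear forms on $A\mathcal{G}_p = (T^t\mathcal{G})_{e_p} = T_{e_p} t^{-1}(p)$, so $\omega_p$ is exactly the value of the fiber form $\widetilde{\omega}^{(p)}$ at the unit element $e_p$. Hence if each $\widetilde{\omega}^{(p)}$ is symplectic then $\omega$ is automatically nondegenerate at every $p \in M$. For the converse, I would invoke formula (\ref{LeftInvCorre4}) from the proof of Proposition~\ref{LeftInvCorr}, which expresses $\widetilde{\omega}_g$ as the pullback of $\omega_{s(g)}$ along the diffeomorphism $l_{g^{-1}}: t^{-1}(t(g)) \to t^{-1}(s(g))$. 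Since $(l_{g^{-1}})_{\ast,g}$ is a linear isomorphism from $(T^t\mathcal{G})_g$ onto $A\mathcal{G}_{s(g)}$, nondegeneracy of $\omega_{s(g)}$ transports to nondegeneracy of $\widetilde{\omega}^{(p)}_g$ at every $g \in t^{-1}(p)$.

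For closedness, I would use equation (\ref{LeftInvCorre1}) in both directions. If $d_{A\mathcal{G}}\omega = 0$ then its right-hand side vanishes identically, so $(d\widetilde{\omega}^{(p)})_g$ vanishes on any triple of left-invariant vector fields evaluated at $g$. Because $d\widetilde{\omega}^{(p)}$ is a $3$-form (so its value at $g$ depends only on the values of its arguments at $g$) and the values of left-invariant sections of $T^t\mathcal{G}$ at $g$ span $(T^t\mathcal{G})_g$ --- being the left-translates of $A\mathcal{G}_{s(g)}$ under $l_g$ --- this forces $d\widetilde{\omega}^{(p)} = 0$ identically on $t^{-1}(p)$. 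Conversely, if each $\widetilde{\omega}^{(p)}$ is closed, specializing (\ref{LeftInvCorre1}) to $g = e_p$ (so $s(g) = p$) immediately yields $(d_{A\mathcal{G}}\omega)_p = 0$ for every $p \in M$.

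The whole argument is largely a bookkeeping exercise on top of Proposition~\ref{LeftInvCorr}. The one genuine subtlety, and the main place care is required, is recognizing that verifying $d\widetilde{\omega}^{(p)} = 0$ on left-invariant vector fields is enough to conclude $d\widetilde{\omega}^{(p)} = 0$ as a form on $t^{-1}(p)$; this in turn rests on the tensoriality of differential forms together with the fact that left-translation moves $A\mathcal{G}_{s(g)}$ isomorphically onto the full fiber $(T^t\mathcal{G})_g$ at every $g$.
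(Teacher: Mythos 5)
Your proposal is correct and follows essentially the same route as the paper: both reduce the theorem to checking that, under the isomorphism $\varphi$ of Proposition \ref{LeftInvCorr}, nondegeneracy transports via the explicit left-translation formula (\ref{LeftInvCorre4}) and closedness transports via equation (\ref{LeftInvCorre1}). Your explicit remark that vanishing of $d\widetilde{\omega}^{(p)}$ on left-invariant vector fields suffices (by tensoriality and the fact that left-invariant sections span each fiber of $T^t\mathcal{G}$) is a point the paper leaves implicit, but it is the same argument.
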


\begin{proof}
Suppose $(A\mathcal{G},\rho,M,\omega)$ is a quasi-Frobenius Lie algebroid.  By Proposition \ref{LeftInvCorr}, there exists a unique section $\widetilde{\omega}\in \Gamma_L(\wedge^2(T^t\mathcal{G})^\ast)$  such that $\widetilde{\omega}|_M=\omega$.   From the proof of Proposition \ref{LeftInvCorr}, $\widetilde{\omega}$ is given explicitly by
\begin{equation}
\label{qFLAThm1e1}
\widetilde{\omega}_g(u,v)=\omega_{s(g)}((l_{g^{-1}})_{\ast,g}u,(l_{g^{-1}})_{\ast,g}v)
\end{equation}
for $g\in \mathcal{G}$ and $u,v\in (T^t\mathcal{G})_g$.  Since $\omega$ is nondegenerate on $A\mathcal{G}$ and 
\begin{equation}
\nonumber
(l_g)_{\ast,h}: (T^t\mathcal{G})_h\stackrel{\sim}{\longrightarrow}  (T^t\mathcal{G})_{gh}
\end{equation}
is a vector space isomorphism for all $g\in \mathcal{G}$ and $h\in t^{-1}(s(g))$, it follows that $\widetilde{\omega}$ is nondegenerate on $T^t\mathcal{G}$.  In particular,  $\widetilde{\omega}^{(p)}:=\widetilde{\omega}|_{t^{-1}(p)}$ is nondegenerate on
\begin{equation}
\nonumber
Tt^{-1}(p)=(T^t\mathcal{G})|_{t^{-1}(p)}
\end{equation}
for all $p\in M$.  

Now let $X,Y\in \Gamma(A\mathcal{G})$ be arbitrary and let $\widetilde{X},\widetilde{Y}\in \Gamma(T^t\mathcal{G})$ be the associated left-invariant vector fields on $\mathcal{G}$.  By Proposition \ref{LeftInvCorr}, 
\begin{equation}
\label{qFLAThm1e2}
[(d\widetilde{\omega}^{(p)})(\widetilde{X},\widetilde{Y})](g)=[(d_{A\mathcal{G}}{\omega})({X},{Y})](s(g)),\hspace*{0.1in}\forall~p\in M,~g\in t^{-1}(p).
\end{equation}
Since $d_{A\mathcal{G}}\omega = 0$, equation (\ref{qFLAThm1e2}) implies that $d\widetilde{\omega}^{(p)}=0$ for all $p\in M$.  Hence, $\widetilde{\omega}^{(p)}$ is a closed and nondegenerate 2-form on $t^{-1}(p)$ for all $p\in M$.  Hence, $(t^{-1}(p),\widetilde{\omega}^{(p)})$ is a symplectic manifold for all $p\in M$.  

On the other hand, suppose that $(\mathcal{G}\rightrightarrows M,\widetilde{\omega})$ is a $t$-symplectic Lie groupoid for some $\widetilde{\omega}\in \Gamma_L(\wedge^2(T^t\mathcal{G})^\ast)$.  Let $\omega:=\widetilde{\omega}|_M$.   Since $(t^{-1}(p),\widetilde{\omega}|_{t^{-1}(p)})$ is a symplectic manifold and
\begin{equation}
\nonumber
(A\mathcal{G})_p=(T^t\mathcal{G})_{p}= T_{p} t^{-1}(p)
\end{equation}
for all $p\in M$ (where, as usual, we identify $M$ with the unit elements of $\mathcal{G}$),  it follows immediately that $\omega:=\widetilde{\omega}|_M$ is nongenerate on $A\mathcal{G}$.  Since $\widetilde{\omega}$ is left-invariant, Proposition \ref{LeftInvCorr} implies equation (\ref{qFLAThm1e2}).  Since $d\widetilde{\omega}^{(p)}=0$ for all $p\in M$ and $s: \mathcal{G}\longrightarrow M$ is surjective, it follows that $d_{A\mathcal{G}}\omega=0$ as well.  Hence, $(A\mathcal{G},\rho,M,\omega)$ is a quasi-Frobenius Lie algebroid.  

Since the above constructions are clearly inverse to one another, the one-to-one correspondence between $t$-symplectic Lie groupoid structures on $\mathcal{G}\rightrightarrows M$ and quasi-Frobenius Lie algebroid structures on $(A\mathcal{G},\rho,M)$ is established.
\end{proof}

\noindent We conclude this section with a few elementary examples of $t$-symplectic Lie groupoids.

\begin{example}
Every symplectic Lie group is naturally a $t$-symplectic Lie groupoid over a point (and vice versa).
\end{example}

\begin{example}
Let $(M,\omega)$ be a symplectic manifold and let $M\times M\rightrightarrows M$ be the pair groupoid with
\begin{equation}
\nonumber
s(p,q):=q,\hspace*{0.2in} t(p,q):=p
\end{equation}
for $p,q\in M$.  Let $j: T^t (M\times M)\hookrightarrow T(M\times M)$ be the inclusion map and let 
\begin{equation}
\nonumber
\widetilde{\omega}:=j^\ast (s^\ast\omega)\in \Gamma(\wedge^2(T^t(M\times M))^\ast)
\end{equation}
 where $j^\ast$ denotes the dual map.  Then $M\times M\rightrightarrows M$ together with $\widetilde{\omega}$ is a $t$-symplectic Lie groupoid.  The associated quasi-Frobenius Lie algebroid is just the tangent algebroid $(TM,id_{M},M)$ with $\omega$ as the nondegenerate 2-cocycle.
\end{example}

\begin{example}
\label{ActionGroupoidExample}
Let $M$ be a manifold with a smooth left-action by a Lie group $G$.   Let $G\times M\rightrightarrows M$ be the associated action groupoid.  

Now suppose that $G$ admits a left-invariant symplectic form $\omega$, that is, $(G,\omega)$ is a symplectic Lie group.  Then $\omega$ induces a $t$-symplectic form $\widetilde{\omega}$ on $G\times M\rightrightarrows M$.  To construct $\widetilde{\omega}$, let
\begin{equation}
\nonumber
j: T^t(G\times M)\hookrightarrow T(G\times M)
\end{equation}
be the inclusion map and let $\pi_1: G\times M\longrightarrow G$ denote the natural projection map.   Then $\widetilde{\omega}\in \Gamma(\wedge^2(T^t(G\times M))^\ast)$ is defined by $\widetilde{\omega}:= j^\ast(\pi_1^\ast\omega)$.  

We now verify that $\widetilde{\omega}$ satisfies the conditions of a $t$-symplectic form.  For $p\in M$, let
\begin{equation}
\nonumber
i_p: t^{-1}(p)\hookrightarrow G\times M
\end{equation}
be the inclusion.  Then 
\begin{equation}
\label{ActionGroupoidExample1}
\widetilde{\omega}|_{t^{-1}(p)}=i_p^\ast(\pi_1^\ast\omega)=(\pi_1\circ i_p)^\ast\omega.
\end{equation}
Equation (\ref{ActionGroupoidExample1}) together with the fact that $d\omega=0$ implies that 
\begin{equation}
\label{ActionGroupoidExample2}
d(\widetilde{\omega}|_{t^{-1}(p)})=0
\end{equation}
for all $p\in M$.  Now let $(g,p)\in t^{-1}(p)$ and let $u,v\in T_{(g,p)} t^{-1}(p)$.  Since $t^{-1}(p)=G\times \{p\}$, it follows that 
\begin{equation}
\nonumber
u=(x,0_p),\hspace*{0.1in} v=(y,0_p)
\end{equation}
for some $x,y\in T_g G$.  Hence,
\begin{equation}
\label{ActionGroupoidExample3}
(\widetilde{\omega}|_{t^{-1}(p)})_{(g,p)}(u,v)=((\pi_1\circ i_p)^\ast\omega)_{(g,p)}(u,v)=\omega_g(x,y).
\end{equation}
Since $\omega$ is nondegenerate, it follows that $\widetilde{\omega}|_{t^{-1}(p)}$ is also nondegenerate.  Hence, $t^{-1}(p)$  together with $\widetilde{\omega}|_{t^{-1}(p)}$ is a symplectic manifold.

All that remains to check is that for all $(g,p)\in G\times M$, the left-translation map
\begin{equation}
\label{ActionGroupoidExample4}
l_{(g,p)}: t^{-1}(g^{-1}p)=G\times \{g^{-1}p\}\longrightarrow t^{-1}(p)=G\times \{p\}
\end{equation}
is a symplectomorphism (where we note that $s(g,p)=g^{-1}p$ and $t(g,p)=p$).  This can be seen as follows:
\begin{align}
\nonumber
(l_{(g,p)})^\ast \widetilde{\omega}|_{t^{-1}(p)}&=(l_{(g,p)})^\ast ((\pi_1\circ i_p)^\ast\omega)\\
\nonumber
&=(\pi_1\circ i_p\circ l_{(g,p)})^\ast\omega\\
\nonumber
&=(l_g\circ \pi_1\circ i_{g^{-1}p})^\ast \omega\\
\nonumber
&=(\pi_1\circ i_{g^{-1}p})^\ast [(l_g)^\ast \omega]\\
\nonumber
&=(\pi_1\circ i_{g^{-1}p})^\ast\omega\\
\nonumber
&=\widetilde{\omega}|_{t^{-1}(g^{-1}p)},
\end{align}
where the fifth equality follows from the fact that $\omega$ is left-invariant.  This proves that $(G\times M\rightrightarrows M,\widetilde{\omega})$ is a $t$-symplectic Lie groupoid.

\end{example}

\begin{example}
\label{Example2}
Let $(M,\omega)$ be a symplectic manifold and let $(G,\beta)$ be a symplectic Lie group.  Let
\begin{equation}
\nonumber
M\times G\times M\rightrightarrows M
\end{equation}
be the Lie groupoid defined by
\begin{itemize}
\item[(i)] $s(q,g,p):=p$
\item[(ii)] $t(q,g,p):=q$
\item[(iii)] $(r,h,q)(q,g,p):=(r,hg,p)$
\item[(iv)] $u(p):=(p,e,p)$
\item[(v)] $i(q,g,p):=(p,g^{-1},q)$.
\end{itemize}
Let
\begin{equation}
\nonumber
\widehat{\omega}:=\pi_2^\ast\beta+\pi_3^\ast\omega,
\end{equation}
where $\pi_2: M\times G\times M\longrightarrow G$  and $\pi_3: M\times G\times M\longrightarrow M$ are the natural projection maps.  Also, define the following inclusion maps
\begin{equation}
\nonumber
j: T^t(M\times G\times M)\hookrightarrow T(M\times G\times M),\hspace*{0.2in}i_p: t^{-1}(p)\hookrightarrow M\times G\times M
\end{equation}
for $p\in M$.  Define $\widetilde{\omega}:=j^\ast\widehat{\omega}$.

Since $\beta$ and $\omega$ are symplectic forms on $G$ and $M$ respectively and
\begin{equation}
\nonumber
t^{-1}(p)=\{p\}\times G\times M,
\end{equation}
it follows immediately that $(t^{-1}(p),\widetilde{\omega}|_{t^{-1}(p)})$ is a symplectic manifold for all $p\in M$.  Furthermore, we have
\begin{align}
\nonumber
(l_{(q,g,p)})^\ast(\widetilde{\omega}|_{t^{-1}(q)})&=(l_{(q,g,p)})^\ast(i_q^\ast\widehat{\omega})\\
\nonumber
&=(i_q\circ l_{(q,g,p)})^\ast \widehat{\omega}\\
\nonumber
&=(i_q\circ l_{(q,g,p)})^\ast (\pi_2^\ast\beta+\pi_3^\ast\omega)\\
\nonumber
&=(\pi_2\circ i_q\circ l_{(q,g,p)})^\ast\beta+(\pi_3\circ i_q\circ l_{(q,g,p)})^\ast\omega\\
\nonumber
&=(l_g\circ \pi_2\circ i_p)^\ast\beta+(\pi_3\circ i_p)^\ast\omega\\
\nonumber
&=i_p^\ast\circ \pi_2^\ast\circ (l_g^\ast\beta)+i_p^\ast (\pi_3^\ast\omega)\\
\nonumber
&=i_p^\ast (\pi_2^\ast\beta+\pi_3^\ast\omega)\\
\nonumber
&=i_p^\ast \widehat{\omega}\\
\nonumber
&=\widetilde{\omega}|_{t^{-1}(p)},
\end{align} 
for all $p,q\in M$, $g\in G$, where the seventh equality follows from the left-invariance of $\beta$.  Hence, $M\times G\times M\rightrightarrows M$ together with $\widetilde{\omega}$ is a $t$-symplectic Lie groupoid.
\end{example}

\section{Symplectic Lie group bundles}
In this section, we introduce the notion of a \textit{symplectic Lie group bundle} (SLGB), which combines the notion of a $t$-symplectic Lie groupoid with that of a Lie group bundle\footnote{See \cite{McK1,McK2} for a review of Lie group bundles.}.  Formally, SLGBs are defined as follows:
\begin{definition}
\label{SLGBDef}
A \textit{symplectic Lie group bundle} consists of the following data: $(G,\omega,E,\pi,M,\widetilde{\omega})$, where 
\begin{itemize}
\item[(i)] $(G,\omega)$ is a symplectic Lie group
\item[(ii)] $\pi: E\rightarrow M$ is smooth fiber bundle with fiber $G$
\item[(iii)] $\widetilde{\omega}$ is a smooth section of $\wedge^2(\ker \pi_\ast)^\ast$
\end{itemize}
such that 
\begin{itemize}
\item[(a)] for all $p\in M$, the fiber $E_p:=\pi^{-1}(p)$ has a Lie group structure (where the smooth structure on the Lie group coincides with the smooth structure on $E_p$ as an embedded submanifold of $E$)
\item[(b)] for all $p\in M$, $\gamma_p^\ast\widetilde{\omega}$ is a left-invariant symplectic form on $E_p$, where 
\begin{equation}
\nonumber
\gamma_p: T(E_p)\hookrightarrow \ker \pi_\ast
\end{equation}
is the inclusion
\item[(c)] there exists a system of local trivializations 
\begin{equation}
\nonumber
\{\psi_i: \pi^{-1}(U_i)\stackrel{\sim}{\rightarrow} U_i\times G\}
\end{equation}
such that for all $i$ and $p\in U_i$,
\begin{equation}
\nonumber
\psi_{i,p}: (E_p,\gamma^\ast_p\widetilde{\omega})\stackrel{\sim}{\rightarrow} (G,\omega)
\end{equation}
is an isomorphism of symplectic Lie groups, where $\psi_{i,p}$ is the composition
\begin{equation}
\nonumber
E_p\stackrel{\psi_i}{\longrightarrow} \{p\}\times G\stackrel{\sim}{\longrightarrow} G
\end{equation}
\end{itemize}
\end{definition}

\begin{proposition}
\label{SLGBProp0}
Every SLGB has a canonical $t$-symplectic Lie groupoid structure. 
\end{proposition}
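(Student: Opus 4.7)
The plan is to turn the fiber bundle $\pi: E\to M$ into a Lie groupoid $E\rightrightarrows M$ by taking $s=t=\pi$ and letting multiplication and inversion act fiberwise via the Lie group structure on each $E_p$. The units will be the identity elements $e_p \in E_p$ for $p\in M$. Once this groupoid structure is in place, the section $\widetilde{\omega}$ given as part of the SLGB data will be shown to satisfy the axioms of a $t$-symplectic form (Definition \ref{tSymplecticDef}).

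First I would verify that $E\rightrightarrows M$ is a Lie groupoid. Set-theoretically the structure maps are clear: $(g,h)\in E_2$ iff $\pi(g)=\pi(h)$, and then $gh$ is the product inside $E_{\pi(g)}$. The surjective submersion condition for $s=\pi$ is immediate since $\pi$ is a fiber bundle projection. To see smoothness of multiplication and inversion, I would use the local trivializations $\psi_i:\pi^{-1}(U_i)\stackrel{\sim}{\to}U_i\times G$ provided by condition (c) of Definition \ref{SLGBDef}: because each $\psi_{i,p}$ is an isomorphism of Lie groups, the transported multiplication on $U_i\times G$ is simply the product $(p,g)\cdot(p,h)=(p,gh)$ computed in $G$, which is smooth. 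The same argument handles inversion and the unit map $p\mapsto e_p$. Thus $E\rightrightarrows M$ is a Lie groupoid, and it is in fact a Lie group bundle in the sense of McKenzie.

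Next, since $t=\pi$, we have $T^t E=\ker t_\ast=\ker\pi_\ast$, so $\widetilde{\omega}\in\Gamma(\wedge^2(T^t E)^\ast)$ as required. For each $p\in M$ the target fiber $t^{-1}(p)$ coincides with $E_p$, and the inclusion $\gamma_p$ of Definition \ref{SLGBDef} is precisely the inclusion of $T(t^{-1}(p))$ into $T^t E$. Hence $\widetilde{\omega}|_{t^{-1}(p)}=\gamma_p^\ast\widetilde{\omega}$, which by condition (b) is a symplectic form on $E_p=t^{-1}(p)$; this gives the symplecticity of the target fibers required by Definition \ref{tSymplecticDef}.

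Finally I need left-invariance of $\widetilde{\omega}$ in the sense of Definition \ref{LeftInvariance}. For $g\in E$, left translation $l_g$ maps $t^{-1}(s(g))=E_{\pi(g)}$ to itself and coincides with the usual left translation by $g$ inside the Lie group $E_{\pi(g)}$. Condition (b) says $\gamma_{\pi(g)}^\ast\widetilde{\omega}$ is a left-invariant symplectic form on $E_{\pi(g)}$, which immediately gives
\begin{equation}
\nonumber
(l_g)^\ast\bigl(\widetilde{\omega}|_{t^{-1}(t(g))}\bigr)=\widetilde{\omega}|_{t^{-1}(s(g))},
\end{equation}
i.e.\ the left-invariance condition. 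The only step that requires any thought is the verification that the fiberwise Lie group operations assemble into smooth maps on $E$ and $E_2$, and this is the role played by the compatibility condition (c); every other piece of the $t$-symplectic structure is read off directly from the defining data of the SLGB.
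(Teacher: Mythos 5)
Your proposal is correct and follows essentially the same route as the paper: equip $E\rightrightarrows M$ with the fiberwise groupoid structure ($s=t=\pi$), identify each target fiber with the Lie group $E_p$, and read off both the symplecticity of the fibers and the left-invariance of $\widetilde{\omega}$ from condition (b) of the SLGB definition. The only cosmetic difference is that you verify Definition \ref{LeftInvariance} directly while the paper routes the same observation through Corollary \ref{tSymplecticCor}, and you spell out the smoothness of the fiberwise operations via the trivializations, which the paper leaves implicit in the phrase ``Lie group bundle.''
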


\begin{proof}
Let $(G,\omega,E,\pi,M,\widetilde{\omega})$ be a SLGB.  Since $E\rightarrow M$ is a Lie group bundle, one can also regard it as a Lie groupoid $E\rightrightarrows M$ as follows:
\begin{itemize}
\item[1.] the source and target maps are defined by $s=t:=\pi$
\item[2.] the unit map $u: M\rightarrow E$ is defined by $u(p):=1_p$ for all $p\in M$ where $1_p$ is the identity element on $E_p$
\item[3.] the groupoid multiplication is induced by the fiber-wise group multiplication: $E_p\times E_p\rightarrow E_p$
\item[4.] the inverse map $i: E\rightarrow E$ is induced by the fiber-wise inverse map $E_p\rightarrow E_p$
\end{itemize}
Since $s=t=\pi$, we have 
\begin{equation}
\nonumber
t^{-1}(s(x))=t^{-1}(t(x))=E_{\pi(x)},\hspace*{0.1in}\forall~x\in E.
\end{equation}
By Definition \ref{SLGBDef},
\begin{equation}
\nonumber
\gamma_{\pi(x)}^\ast \widetilde{\omega}=\widetilde{\omega}|_{E_{\pi(x)}}
\end{equation}
is a left-invariant symplectic form on $E_{\pi(x)}$ for all $x\in E$.  Hence,  
\begin{equation}
\nonumber
l_x: (E_{\pi(x)},\widetilde{\omega}|_{E_{\pi(x)}})\stackrel{\sim}{\rightarrow}(E_{\pi(x)},\widetilde{\omega}|_{E_{\pi(x)}})
\end{equation}
is a symplectomorphism.   By Corollary \ref{tSymplecticCor}, $\widetilde{\omega}$ is a $t$-symplectic form on $E\rightrightarrows M$.  This completes the proof.
\end{proof}

\begin{proposition}
\label{SLGBProp1}
Let $\mathcal{E}=(G,\omega,E,\pi,M,\widetilde{\omega})$ be a SLGB and let $(AE,\rho,M,\beta)$ be the associated quasi-Frobenius Lie algebroid (where $\mathcal{E}$ is equipped with its canonical $t$-symplectic Lie groupoid structure).  Then
\begin{itemize}
\item[(i)] $\rho\equiv 0$
\item[(ii)] the Lie bracket on $\Gamma(AE)$ is $C^\infty(M)$-bilinear; in particular, there is an induced Lie algebra structure on the fiber $(AE)_p$ for all $p\in M$
\item[(iii)] there exists a system of local trivializations
\begin{equation}
\nonumber
\{\varphi_i: \pi_{AE}^{-1}(U_i)\stackrel{\sim}{\rightarrow} U_i\times \frak{g}\}
\end{equation}
such that for all $i$ and $p\in U_i$, 
\begin{equation}
\nonumber
\varphi_{i,p}: ((AE)_p,\beta_p)\stackrel{\sim}{\rightarrow} (\frak{g},\omega_e)
\end{equation}
is an isomorphism of quasi-Frobenius Lie algebras, where $\pi_{AE}$ is the projection map from $AE$ to $M$, $(\frak{g},\omega_e)$ is the quasi-Frobenius Lie algebra associated to $(G,\omega)$, and $\varphi_{i,p}$ is the composition
\begin{equation}
\nonumber
(AE)_p\stackrel{\varphi_i}{\longrightarrow} \{p\}\times \frak{g}\stackrel{\sim}{\longrightarrow} \frak{g}.
\end{equation}
\end{itemize} 
\end{proposition}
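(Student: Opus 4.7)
The plan is to derive each of the three assertions directly from the defining properties of an SLGB, combined with Theorem \ref{LieAlgebroidOfLieGroupoid} and Theorem \ref{qFLAThm1}. For part (i), I would simply observe that in the canonical groupoid structure on $E$ constructed in Proposition \ref{SLGBProp0} we have $s = t = \pi$, so the anchor is
\[
\rho \;=\; s_\ast|_{AE} \;=\; \pi_\ast|_{AE},
\]
but $AE = T^t E|_M = \ker \pi_\ast|_M$, which forces $\rho \equiv 0$.

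For part (ii), with $\rho \equiv 0$ in hand, the Leibniz rule in Definition \ref{LieAlgebroidDef} collapses to $[X, fY] = f[X,Y]$, and the companion identity $[fX, Y] = f[X,Y] - (\rho(Y)f)X$ collapses to $[fX, Y] = f[X,Y]$. Hence $[\cdot,\cdot]$ on $\Gamma(AE)$ is $C^\infty(M)$-bilinear, so it is tensorial and descends to a well-defined Lie bracket on each fiber $(AE)_p$ by $[X_p, Y_p] := [X,Y]_p$ for any local sections $X, Y$ extending the given vectors.

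For part (iii), I would build $\varphi_i$ from the SLGB trivialization $\psi_i$ by taking its differential and restricting. Since $\psi_i : \pi^{-1}(U_i) \to U_i \times G$ is fiber-preserving over $U_i$, the pushforward $(\psi_i)_\ast$ sends $\ker \pi_\ast$ into $\ker(\mathrm{pr}_1)_\ast$; and since each $\psi_{i,p}$ is a Lie group isomorphism it maps $1_p \mapsto (p,e)$, so restricting to the unit section $U_i \subset E$ produces a vector bundle isomorphism
\[
\varphi_i : (AE)|_{U_i} \stackrel{\sim}{\longrightarrow} U_i \times \mathfrak{g},
\qquad
\varphi_{i,p} = (\psi_{i,p})_{\ast,\,1_p}.
\]
Because $\psi_{i,p}$ is a Lie group isomorphism, its differential at the identity is a Lie algebra isomorphism $(AE)_p \to \mathfrak{g}$; and because it is additionally a symplectomorphism from $(E_p, \gamma_p^\ast \widetilde{\omega})$ to $(G, \omega)$, evaluating the identity $\psi_{i,p}^\ast \omega = \gamma_p^\ast \widetilde{\omega}$ at $1_p$ gives $\varphi_{i,p}^\ast \omega_e = \beta_p$. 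Thus $\varphi_{i,p}$ is an isomorphism of quasi-Frobenius Lie algebras.

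The only substantive bookkeeping is to verify that the fiberwise Lie algebra structure on $(AE)_p$ produced in (ii) genuinely coincides with the one transported by $\varphi_{i,p}$ from $\mathfrak{g}$; this is where I expect the subtlety to sit. It should follow from Proposition \ref{InducedLieAlgebroidMorph}: since $\psi_i$ is a fiberwise Lie group isomorphism, it is a Lie groupoid isomorphism between $\pi^{-1}(U_i) \rightrightarrows U_i$ and the trivial Lie group bundle $U_i \times G \rightrightarrows U_i$ (both viewed as groupoids), so it induces a Lie algebroid isomorphism whose fiberwise restriction is precisely $\varphi_{i,p}$. Smoothness of $\varphi_i$ and the transition compatibility between the $\varphi_i$ are then automatic consequences of smoothness of the $\psi_i$.
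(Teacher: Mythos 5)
Your proposal is correct and follows essentially the same route as the paper: part (i) from $s=t=\pi$ so that $\rho=\pi_\ast|_{AE}$ vanishes on $AE=\ker\pi_\ast|_M$, part (ii) from the Leibniz rule with vanishing anchor, and part (iii) by restricting $(\psi_i)_\ast$ to $AE|_{U_i}$ and using that each $\psi_{i,p}$ is an isomorphism of symplectic Lie groups together with $\beta_p=[\gamma_p^\ast\widetilde{\omega}]_{1_p}$. Your closing verification that the fiberwise bracket from (ii) agrees with the one transported from $\mathfrak{g}$ (via Proposition \ref{InducedLieAlgebroidMorph}) is a point the paper leaves implicit, and is a welcome addition rather than a deviation.
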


\begin{proof}
(i) follows from the fact that 
\begin{equation}
\nonumber
AE:=(\ker t_{\ast})|_{u(M)}
\end{equation}
$\rho:=s_\ast|_{AE}$, and $s=t=\pi$ from the proof of Proposition \ref{SLGBProp0}.  (Recall that we regard $AE$ as a vector bundle over $M$ by identifying the unit element $1_p\in E$ with $p\in M$.)

(ii) follows from the Leibniz property of the Lie bracket on $\Gamma(AE)$ together with the fact that the anchor map $\rho$ is identically zero. 

For (iii), let 
\begin{equation}
\nonumber
\{\psi_i: \pi^{-1}(U_i)\stackrel{\sim}{\rightarrow} U_i\times G\}
\end{equation}
be a system of local trivialization on $\pi: E\rightarrow M$ such that for all $i$ and $p\in U_i$,
\begin{equation}
\nonumber
\psi_{i,p}: (E_p,\gamma^\ast_p\widetilde{\omega})\stackrel{\sim}{\rightarrow} (G,\omega)
\end{equation}
is an isomorphism of symplectic Lie groups, where $\gamma_p: T(E_p)\hookrightarrow \ker \pi_\ast$ is the inclusion.  For each $i$, the restriction of 
\begin{equation}
\nonumber
(\psi_i)_\ast: T(\pi^{-1}(U_i))\stackrel{\sim}{\rightarrow} T(U_i\times G)
\end{equation}
to $AE|_{U_i}$ induces a local trivialization
\begin{equation}
\nonumber
\varphi_i: AE|_{U_i}\stackrel{\sim}{\rightarrow} U_i\times \frak{g}.
\end{equation}
Furthermore, for all $i$ and $p\in U_i$,
\begin{equation}
\nonumber
\varphi_{i,p}: ((AE)_p,\beta_p)\stackrel{\sim}{\rightarrow} (\frak{g},\omega_e)
\end{equation}
is an isomorphism of quasi-Frobenius Lie algebras.  Indeed, this follows from the fact that $(AE)_p=T_{1_p}(E_p)$,
\begin{equation}
\nonumber
\beta_p=\widetilde{\omega}_{1_p}=[\gamma_p^\ast\widetilde{\omega}]_{1_p}
\end{equation}
by Theorem \ref{qFLAThm1}, and $\psi_{i,p}$ is an isomorphism of symplectic Lie groups. This completes the proof.

\end{proof}

\begin{remark}
The Lie algebroid appearing in Proposition \ref{SLGBProp1} is both a quasi-Frobenius Lie algebroid and a Lie algebra bundle\footnote{See \cite{McK1, McK2} for a review of Lie algebra bundles.}.  For this reason, it is only natural that we call a quasi-Frobenius Lie algebroid $(A,\rho,M,\beta)$ satisfying conditions (i)-(iii) of Proposition \ref{SLGBProp1} a \textit{quasi-Frobenius Lie algebra bundle} (QFLAB).
\end{remark}

We now give a characterization of general SLGBs.  To this end, we will make use of the following results: 

\begin{lemma}
\label{LeftInvarianceLemma}
Let $\varphi: G\rightarrow H$ be a Lie group homomorphism and let $\theta\in \Omega^k(H)$ be a left-invariant $k$-form.  Then $\varphi^\ast\theta\in \Omega^k(G)$ is also left-invariant.
\end{lemma}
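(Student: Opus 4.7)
The plan is to unwind the definitions and use the functoriality of pullback together with the fact that a Lie group homomorphism intertwines left translations. Concretely, left-invariance of $\theta\in\Omega^k(H)$ means $l_h^\ast\theta=\theta$ for every $h\in H$, and the goal is to verify the same identity $l_g^\ast(\varphi^\ast\theta)=\varphi^\ast\theta$ for every $g\in G$.

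The key observation I would record first is the intertwining relation
\begin{equation}
\nonumber
\varphi\circ l_g = l_{\varphi(g)}\circ \varphi,
\end{equation}
which is an immediate consequence of $\varphi(gg')=\varphi(g)\varphi(g')$. Once this is in hand, the rest is pure functoriality: pulling back both sides and applying $(\,\cdot\,)^\ast$ to $\theta$ gives
\begin{equation}
\nonumber
l_g^\ast(\varphi^\ast\theta) \;=\; (\varphi\circ l_g)^\ast\theta \;=\; (l_{\varphi(g)}\circ\varphi)^\ast\theta \;=\; \varphi^\ast\bigl(l_{\varphi(g)}^\ast\theta\bigr) \;=\; \varphi^\ast\theta,
\end{equation}
where the last equality is exactly the left-invariance hypothesis for $\theta$ applied at the element $\varphi(g)\in H$. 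Since $g\in G$ was arbitrary, this shows $\varphi^\ast\theta$ is left-invariant on $G$.

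There is essentially no obstacle here; the only thing to be careful about is the direction of composition when passing pullbacks through the homomorphism, and that is handled by the contravariance identity $(f\circ h)^\ast = h^\ast\circ f^\ast$. The lemma is recorded precisely because it will be invoked repeatedly in the forthcoming characterization of general SLGBs, where symplectic forms on Lie group fibers must be transported via transition functions that are themselves Lie group isomorphisms.
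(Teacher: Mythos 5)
Your proposal is correct and follows exactly the same route as the paper's proof: the intertwining identity $\varphi\circ l_g=l_{\varphi(g)}\circ\varphi$ followed by contravariance of pullback and the left-invariance of $\theta$. Nothing to add.
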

\begin{proof}
This is just a direct calculation:
\begin{align}
\nonumber
l_g^\ast (\varphi^\ast\theta)&=(\varphi\circ l_g)^\ast\theta\\
\nonumber
&=(l_{\varphi(g)}\circ \varphi)^\ast\theta\\
\nonumber
&=\varphi^\ast (l_{\varphi(g)}^\ast\theta)\\
\nonumber
&=\varphi^\ast\theta.
\end{align}
This completes the proof.
\end{proof}

\begin{lemma}
\label{TrivialSLGB}
Let $(G,\omega)$ be a symplectic Lie group and let $M$ be a manifold.  Let $\pi_1: M\times G\rightarrow M$ and $\pi_2: M\times G\rightarrow G$ denote the natural projections.  Also, let $\tau: \ker (\pi_{1})_\ast\hookrightarrow T(M\times G)$ be the inclusion.  Then 
 \begin{equation}
\nonumber
(G,\omega,M\times G,\pi_1,M,\tau^\ast(\pi^\ast_2\omega))
\end{equation}
is a SLGB, where for all $p\in M$, the Lie group structure on $\pi_1^{-1}(p)$ is the natural one.
\end{lemma}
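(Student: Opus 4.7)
The plan is to verify the three structural requirements (i)--(iii) and the three conditions (a)--(c) of Definition \ref{SLGBDef} in turn, essentially by unwinding definitions. The only nontrivial bookkeeping concerns the identification of the fiberwise pullback $\gamma_p^{\ast}\tau^{\ast}(\pi_2^{\ast}\omega)$ with $\omega$ itself under the canonical isomorphism $\{p\}\times G\simeq G$.

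First I would note that (i)--(iii) of Definition \ref{SLGBDef} are essentially free: $(G,\omega)$ is symplectic Lie by hypothesis, the trivial bundle $\pi_1\colon M\times G\to M$ is a smooth fiber bundle with fiber $G$, and $\tau^{\ast}(\pi_2^{\ast}\omega)$ is by construction a smooth section of $\wedge^2(\ker\pi_{1,\ast})^{\ast}$. For condition (a), the fiber $\pi_1^{-1}(p)=\{p\}\times G$ carries the obvious Lie group structure transferred from $G$ via the diffeomorphism $\pi_2|_{\{p\}\times G}\colon \{p\}\times G\xrightarrow{\sim} G$.

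The core of the argument is condition (b). For fixed $p\in M$, the inclusion $\tau\circ\gamma_p\colon T(\{p\}\times G)\hookrightarrow T(M\times G)$ is exactly the differential of the embedding $\{p\}\times G\hookrightarrow M\times G$. Composing with $\pi_2$, the map $\pi_2\circ(\text{incl})\colon \{p\}\times G\to G$ is a Lie group isomorphism, so
\begin{equation}
\nonumber
\gamma_p^{\ast}\tau^{\ast}(\pi_2^{\ast}\omega)=(\pi_2|_{\{p\}\times G})^{\ast}\omega.
\end{equation}
Since this is the pullback of a symplectic form along a diffeomorphism, it is a symplectic form on $\{p\}\times G$; and since it is the pullback of a left-invariant form along a Lie group homomorphism, Lemma \ref{LeftInvarianceLemma} ensures it is left-invariant on $\{p\}\times G$.

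For condition (c), I would take the trivial global trivialization $\psi:=\mathrm{id}\colon M\times G\to M\times G$ (with $\{U_i\}=\{M\}$). Then for each $p\in M$, the map $\psi_{p}\colon \{p\}\times G\to G$ is precisely $\pi_2|_{\{p\}\times G}$, which is a Lie group isomorphism by construction and pulls $\omega$ back to $\gamma_p^{\ast}\tau^{\ast}(\pi_2^{\ast}\omega)$ by the displayed equation above. Hence $\psi_p$ is an isomorphism of symplectic Lie groups, completing all the conditions of Definition \ref{SLGBDef}. There is no real obstacle here beyond keeping the pullbacks and inclusions straight; the whole argument is a diagram chase on the single identification $\pi_2|_{\{p\}\times G}$.
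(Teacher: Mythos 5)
Your proposal is correct and follows essentially the same route as the paper's proof: the key identity $\gamma_p^{\ast}\tau^{\ast}(\pi_2^{\ast}\omega)=(\pi_2\circ\iota_p)^{\ast}\omega$, the appeal to Lemma \ref{LeftInvarianceLemma} via the Lie group isomorphism $\pi_2|_{\{p\}\times G}$, and the identity map as the global trivialization all match. Your write-up is merely a bit more explicit in itemizing conditions (i)--(iii) and (a)--(c) of Definition \ref{SLGBDef}.
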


\begin{proof}
For $p\in M$, let 
\begin{equation}
\nonumber
\gamma_p: T(\{p\}\times G)\hookrightarrow \ker (\pi_1)_\ast
\end{equation}
and
\begin{equation}
\nonumber
\iota_p: \{p\}\times G\hookrightarrow M\times G
\end{equation}
denote the inclusion maps.  Note that 
\begin{align}
\nonumber
\gamma_p^\ast(\tau^\ast(\pi^\ast_2\omega))&=(\tau\circ \gamma_p)^\ast(\pi^\ast_2\omega)\\
\nonumber
&=(\iota_p)^\ast(\pi^\ast_2\omega)\\
\label{TrivialSLGBE1}
&=(\pi_2\circ \iota_p)^\ast\omega.
\end{align}
Then for $g\in G$, we have
\begin{align}
\nonumber
(l_{(p,g)})^\ast[\gamma_p^\ast(\tau^\ast(\pi^\ast_2\omega))]&=(l_{(p,g)})^\ast[(\pi_2\circ \iota_p)^\ast\omega]\\
\nonumber
&=(\pi_2\circ \iota_p)^\ast\omega\\
\nonumber
&=\gamma_p^\ast(\tau^\ast(\pi^\ast_2\omega))
\end{align}
where we have used (\ref{TrivialSLGBE1}) in the first and third equality and Lemma \ref{LeftInvarianceLemma} in the second equality, where we note that 
\begin{equation}
\nonumber
\pi_2\circ \iota_p: \{p\}\times G\rightarrow G
\end{equation}
is a Lie group isomorphism.  Hence, $\gamma_p^\ast(\tau^\ast(\pi^\ast_2\omega))$ is left-invariant.  Furthermore, (\ref{TrivialSLGBE1}) implies that $\omega$ is closed and non-degenerate, i.e., symplectic.  This proves that $(\pi_1^{-1}(p), \gamma_p^\ast(\tau^\ast(\pi^\ast_2\omega)))$ is a symplectic Lie group.  Lastly, the identity map
\begin{equation}
\nonumber
id: M\times G\rightarrow M\times G
\end{equation}
is the desired trivialization for the SLGB structure.  This completes the proof.  
\end{proof}

\begin{proposition}
\label{AutomorphismSLG}
Let $(G,\omega)$ be a connected symplectic Lie group and let $\mbox{Aut}(G,\omega)$ be the group of automorphisms of $(G,\omega)$.  Then $\mbox{Aut}(G,\omega)$ is a finite dimensional Lie group.   Furthermore, if $G$ is simply connected, then $\mbox{Aut}(G,\omega)\simeq \mbox{Aut}(\frak{g},\omega_e)$ as Lie groups, where $(\frak{g},\omega_e)$ is the quasi-Frobenius Lie algebra associated to $(G,\omega)$ and $ \mbox{Aut}(\frak{g},\omega_e)$ is the group of automorphisms of $(\frak{g},\omega_e)$.
\end{proposition}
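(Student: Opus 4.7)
The plan is to pass from the group level to the Lie algebra level via the derivative map at the identity. Define
\[
D: \mbox{Aut}(G,\omega) \longrightarrow \mbox{GL}(\frak{g}), \qquad \varphi \longmapsto \varphi_{\ast,e}.
\]
For any $\varphi \in \mbox{Aut}(G,\omega)$, the linear map $\varphi_{\ast,e}$ is a Lie algebra automorphism of $\frak{g}$; moreover, by Lemma \ref{LeftInvarianceLemma} applied to $\varphi: G \to G$, the pullback $\varphi^\ast \omega$ is again left-invariant. Since both $\omega$ and $\varphi^\ast\omega$ are left-invariant 2-forms on $G$, the condition $\varphi^\ast \omega = \omega$ is equivalent to the single pointwise identity $(\varphi_{\ast,e})^\ast \omega_e = \omega_e$. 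Therefore $D$ factors through $\mbox{Aut}(\frak{g},\omega_e)$, the group of Lie algebra automorphisms preserving $\omega_e$.

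Next, I would observe that $\mbox{Aut}(\frak{g},\omega_e)$ is a closed algebraic subgroup of $\mbox{GL}(\frak{g})$: it is cut out from $\mbox{GL}(\frak{g})$ by the bracket-preservation equations and the bilinear constraints $\omega_e(\phi x, \phi y) = \omega_e(x,y)$, so it is a finite-dimensional Lie group. The map $D$ is injective because $G$ is connected and any $\varphi \in \mbox{Aut}(G,\omega)$ satisfies $\varphi \circ \exp = \exp \circ \varphi_{\ast,e}$, which determines $\varphi$ on a neighborhood of $e$, hence on all of $G$. To equip $\mbox{Aut}(G,\omega)$ with a Lie group structure, I would invoke the classical theorem of Hochschild that for a connected Lie group $G$ the derivative map $\mbox{Aut}(G) \to \mbox{Aut}(\frak{g})$ is injective with closed image and realizes $\mbox{Aut}(G)$ as a finite-dimensional Lie group; then $\mbox{Aut}(G,\omega) = D^{-1}(\mbox{Aut}(\frak{g},\omega_e))$ is the preimage of a closed subgroup, hence itself a closed Lie subgroup of $\mbox{Aut}(G)$, and in particular finite dimensional.

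For the simply-connected case, the monodromy principle gives that every Lie algebra automorphism of $\frak{g}$ integrates to a unique Lie group automorphism of $G$, so $D: \mbox{Aut}(G) \to \mbox{Aut}(\frak{g})$ is already bijective. Combined with the equivalence $\varphi^\ast\omega = \omega \Leftrightarrow (\varphi_{\ast,e})^\ast\omega_e = \omega_e$ established above, the restriction $D: \mbox{Aut}(G,\omega) \to \mbox{Aut}(\frak{g},\omega_e)$ is bijective, and in fact a Lie group isomorphism (both $D$ and its inverse are smooth, being the differentiation and integration maps).

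The main obstacle will be the closed-embedding statement for $D$ in the general non-simply-connected case, for which one cannot avoid some global argument. A direct, self-contained proof would proceed by writing $G = \widetilde{G}/\Gamma$ with $\widetilde{G}$ the universal cover and $\Gamma \subset Z(\widetilde{G})$ a discrete central subgroup, and then identifying the image of $D$ with those $\phi \in \mbox{Aut}(\frak{g},\omega_e)$ whose unique integrated automorphism $\widetilde\varphi$ of $\widetilde{G}$ stabilizes $\Gamma$ --- a closed condition because $\Gamma$ is discrete. This subtlety is what one bypasses by restricting to the simply-connected case, where the clean isomorphism $\mbox{Aut}(G,\omega) \simeq \mbox{Aut}(\frak{g},\omega_e)$ follows immediately from integration of Lie algebra morphisms.
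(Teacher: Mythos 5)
Your proposal is correct and follows essentially the same route as the paper: both pass to the derivative map $\mbox{Aut}(G)\to\mbox{Aut}(\frak{g})$, invoke the classical theorem (Chevalley in the paper, Hochschild in your writeup) that this realizes $\mbox{Aut}(G)$ as a finite-dimensional Lie group, deduce that $\mbox{Aut}(G,\omega)$ is a closed subgroup, and handle the simply connected case by integrating Lie algebra automorphisms. The only cosmetic difference is that you obtain closedness by exhibiting $\mbox{Aut}(G,\omega)$ as the preimage of the closed set $\mbox{Aut}(\frak{g},\omega_e)$ via the equivalence $\varphi^\ast\omega=\omega\Leftrightarrow(\varphi_{\ast,e})^\ast\omega_e=\omega_e$, whereas the paper runs a sequential limit argument using the same equivalence in one direction.
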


\begin{proof}
Let $\frak{g}$ be the Lie algebra of $G$.  In \cite{Ch}, Chevalley proved that the automorphism group of any finite dimensional connected Lie group is again a finite dimensional Lie group.  To show that $\mbox{Aut}(G,\omega)$ is a Lie group, it suffices to show that $\mbox{Aut}(G,\omega)$ is a closed subset of the Lie group $\mbox{Aut}(G)$; the closed subgroup theorem \cite{Wa} then implies that $\mbox{Aut}(G,\omega)$ is an embedded Lie subgroup of $\mbox{Aut}(G)$.  

To this end, define
\begin{equation}
\nonumber
f:  \mbox{Aut}(G)\rightarrow \mbox{Aut}(\frak{g})\subset GL(\frak{g}),\hspace*{0.1in} \varphi\mapsto \varphi_\ast: \frak{g}\stackrel{\sim}{\rightarrow} \frak{g}.
\end{equation}
Since $G$ is connected, $\varphi\in \mbox{Aut(G)}$ is uniquely determined by $f(\varphi)\in \mbox{Aut}(\frak{g})$.  From \cite{Ch}, the Lie group structure on $\mbox{Aut(G)}$ is obtained by using $f$ to identify $\mbox{Aut(G)}$ with $\mbox{im}~f$, which is shown to be a closed subgroup of the Lie group $\mbox{Aut}(\frak{g})$ (which in turn is a closed subgroup of $GL(\frak{g})$).  

By definition,
\begin{equation}
\nonumber
\mbox{Aut}(G,\omega)=\{\varphi\in \mbox{Aut}(G)~|~\varphi^\ast\omega=\omega\}.
\end{equation}
Let $\varphi\in \mbox{Aut}(G)$ be a limit point of $\mbox{Aut}(G,\omega)$ and let $\{\varphi_n\}\subset \mbox{Aut}(G,\omega)$ be a sequence which converges to $\varphi$.  Since $f$ is a Lie group isomorphism from $\mbox{Aut}(G)$ to $\mbox{im}~f$ (in particular - a homeomorphism), we have
\begin{equation}
\nonumber
f(\varphi_n)=(\varphi_n)_\ast\rightarrow f(\varphi)=\varphi_\ast.
\end{equation}
Hence, for $x,y\in \frak{g}=T_eG$, we have
\begin{align}
\nonumber
\varphi^\ast\omega_e(x,y)&=\omega_e(\varphi_\ast x,\varphi_\ast y)\\
\nonumber
&=\lim_{n\rightarrow \infty} \omega_e((\varphi_n)_\ast x,(\varphi_n)_\ast y)\\
\nonumber
&=\lim_{n\rightarrow \infty}(\varphi_n)^\ast\omega_e(x,y)\\
\nonumber
&=\lim_{n\rightarrow \infty}[(\varphi_n)^\ast\omega]_e(x,y)\\
\nonumber
&=\lim_{n\rightarrow\infty}\omega_e(x,y)\\
\nonumber
&=\omega_e(x,y),
\end{align}
where we have used the fact that $\varphi_n\in \mbox{Aut}(G,\omega)$ in the second to last equality.  Hence, $\varphi^\ast\omega_e=\omega_e$.  Since $\omega$ is left-invariant and $\varphi\in \mbox{Aut}(G)$, it follows that $\varphi^\ast\omega=\omega$.  This shows that $\varphi\in \mbox{Aut}(G,\omega)$, which in turn implies that $\mbox{Aut}(G,\omega)$ is a closed subset of $\mbox{Aut}(G)$.

For the last part of Proposition \ref{AutomorphismSLG}, suppose that $G$ is simply connected.  Then
\begin{equation}
\nonumber
f: \mbox{Aut}(G)\stackrel{\sim}{\rightarrow} \mbox{Aut}(\frak{g})
\end{equation}
is a Lie group isomorphism.  In addition, note that $f(\mbox{Aut}(G,\omega))=\mbox{Aut}(\frak{g},\omega_e)$.  Hence, the restriction of $f$ to $\mbox{Aut}(G,\omega)$ gives a Lie group isomorphism from $\mbox{Aut}(G,\omega)$ to $\mbox{Aut}(\frak{g},\omega_e)$.   This completes the proof.   
\end{proof}

\noindent The following result provides an alternate way of viewing SLGBs: 

\begin{theorem}
\label{SLGBAlternate}
Let $(G,\omega)$ be a connected symplectic Lie group and let $\pi: E\rightarrow M$ be a smooth fiber bundle with fiber $G$.  Then $\pi: E\rightarrow M$ admits the structure of a SLGB if and only if there exists a system of local trivializations
\begin{equation}
\nonumber
\{\psi_i: \pi^{-1}(U_i)\rightarrow U_i\times G\}
\end{equation}
for which all the transition functions take their values in the Lie group $\mbox{Aut}(G,\omega)$.  
\end{theorem}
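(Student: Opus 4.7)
The plan is to handle the two implications separately. For the ($\Rightarrow$) direction I would take the system $\{\psi_i\}$ guaranteed by Definition \ref{SLGBDef}(c); since each fiber map $\psi_{i,p}\colon (E_p,\gamma_p^\ast\widetilde{\omega}) \to (G,\omega)$ is an isomorphism of symplectic Lie groups, the fiberwise composite $\psi_{j,p}\circ \psi_{i,p}^{-1}$ lies in $\mathrm{Aut}(G,\omega)$ for every $p \in U_i \cap U_j$. The real content is smoothness: I must verify that $p \mapsto \psi_{j,p}\circ \psi_{i,p}^{-1}$ is smooth as a map into $\mathrm{Aut}(G,\omega)$. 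I would invoke Proposition \ref{AutomorphismSLG}, which identifies $\mathrm{Aut}(G,\omega)$ with an embedded Lie subgroup of $\mathrm{Aut}(\mathfrak{g}) \subset GL(\mathfrak{g})$ via $\varphi \mapsto \varphi_\ast$. Smoothness then reduces to smoothness of the map sending $p$ to the derivative at $e$ of the smoothly varying diffeomorphism $\psi_{j,p}\circ \psi_{i,p}^{-1}\colon G \to G$, which follows at once from smoothness of the transition function as a bundle map $(U_i\cap U_j)\times G \to G$.

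For the ($\Leftarrow$) direction, let $\{\psi_i\colon \pi^{-1}(U_i) \to U_i \times G\}$ be a system of trivializations whose transitions take values in $\mathrm{Aut}(G,\omega)$. I would construct the SLGB data in three steps. First, transport the group structure of $G$ to each fiber $E_p$ by declaring $\psi_{i,p}\colon E_p \to G$ to be a Lie group isomorphism; the result is independent of $i$ precisely because the transitions lie in $\mathrm{Aut}(G) \supset \mathrm{Aut}(G,\omega)$. Second, define a global section $\widetilde{\omega} \in \Gamma(\wedge^2(\ker \pi_\ast)^\ast)$ by
\[
\widetilde{\omega}_x(u,v) := \omega_{\psi_{i,p}(x)}\bigl((\psi_{i,p})_{\ast,x} u,\, (\psi_{i,p})_{\ast,x} v\bigr)
\]
for $p = \pi(x)$ and any index $i$ with $p \in U_i$. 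The only nontrivial check is well-definedness: on $\pi^{-1}(U_i \cap U_j)$ the two local recipes differ by pullback along $\psi_{j,p}\circ \psi_{i,p}^{-1}$, which preserves $\omega$ by hypothesis.

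Third, I would verify conditions (a)--(c) of Definition \ref{SLGBDef}. Condition (a) is step one. For condition (b), observe that on $\pi^{-1}(U_i)$ the form $\widetilde{\omega}$ is by construction the pullback under $\psi_i$ of the canonical form $\tau^\ast(\pi_2^\ast\omega)$ on $U_i \times G$. Lemma \ref{TrivialSLGB} then shows that $\gamma_p^\ast \widetilde{\omega}$ is a left-invariant symplectic form on $E_p$, since $\psi_{i,p}$ is a Lie group isomorphism carrying it to $\omega$. Condition (c) is immediate from the given trivializations.

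The main obstacle will be the smoothness argument in the forward direction: converting a smooth bundle automorphism into a smooth map valued in the Lie group $\mathrm{Aut}(G,\omega)$. Proposition \ref{AutomorphismSLG} is exactly the tool that reduces this to smoothness into $GL(\mathfrak{g})$, which is standard. Everything else is bookkeeping, resting on the single observation that transitions in $\mathrm{Aut}(G,\omega)$ preserve the Lie group and symplectic structures simultaneously.
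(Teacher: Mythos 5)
Your proposal is correct and follows essentially the same route as the paper: the forward direction is the same fiberwise observation that $\psi_{j,p}\circ\psi_{i,p}^{-1}$ is an automorphism of $(G,\omega)$, and the backward direction transports the group structure and the symplectic form fiber by fiber, with well-definedness resting on the transitions taking values in $\mathrm{Aut}(G,\omega)$. The one point you treat that the paper leaves implicit is the smoothness of the transition functions as maps into the Lie group $\mathrm{Aut}(G,\omega)$, which you correctly reduce via Proposition \ref{AutomorphismSLG} to smoothness into $GL(\mathfrak{g})$; your direct pointwise formula for $\widetilde{\omega}$ is the same object the paper obtains by gluing the pullbacks of the trivial SLGB forms supplied by Lemma \ref{TrivialSLGB}.
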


\begin{proof}
$(\Rightarrow)$.  Suppose $(G,\omega,E,\pi,M,\widetilde{\omega})$ is a SLGB.  By Definition \ref{SLGBDef}, there exists a system of local trivializations  
\begin{equation}
\nonumber
\{\psi_i: \pi^{-1}(U_i)\stackrel{\sim}{\rightarrow} U_i\times G\}
\end{equation}
such that 
\begin{equation}
\nonumber
\psi_{i,p}: (E_p,\gamma_p^\ast\widetilde{\omega})\stackrel{\sim}{\rightarrow}  (G,\omega)
\end{equation}
is an isomorphism of symplectic Lie groups for all $i$ and $p\in U_i$, where $\gamma_p: T(E_p)\hookrightarrow  \ker~(\pi_E)_\ast$ is the inclusion.  This implies that for all $i, ~j$ such that $U_i\cap U_j\neq \emptyset$ and for all $p\in U_i\cap U_j$, the map
\begin{equation}
\nonumber
\psi_{j,p}\circ \psi_{i,p}^{-1}: (G,\omega)\stackrel{\sim}{\rightarrow}(G,\omega)\hspace*{0.1in}g\mapsto \phi_{ji}(p)g
\end{equation}
is an automorphism of $(G,\omega)$, where $\phi_{ji}$ is the transition function associated to $\psi_j\circ \psi_i^{-1}$.  Hence, $\mbox{im}~\phi_{ji}\subset \mbox{Aut}(G,\omega)$.  

$(\Leftarrow)$.  On the other hand, suppose that $\pi: E\rightarrow M$ is a smooth fiber bundle with fiber $G$ for which there exists a system of local trivializations
\begin{equation}
\nonumber
\{\psi_i: \pi^{-1}(U_i)\stackrel{\sim}{\rightarrow} U_i\times G\}
\end{equation}
whose transition functions all take their values in the Lie group $\mbox{Aut}(G,\omega)$.  First, we define a Lie group structure on the fibers of $E$.  Let $p\in M$ and let $U_i$ be any open set such that $p\in U_i$.  The (abstract) group structure on the fiber $E_p$ is obtained by declaring
\begin{equation}
\nonumber
\psi_{i,p}: E_p\stackrel{\sim}{\rightarrow} G
\end{equation}
to be a group isomorphism.  Hence, for $g,h\in G$, the product, inverse, and identity on $E_p$ are given respectively by
\begin{equation}
\label{SLGBAlternateE1}
\psi_{i,p}^{-1}(g)\cdot \psi_{i,p}^{-1}(h):=\psi_{i,p}^{-1}(gh), \hspace*{0.1in} (\psi_{i,p}^{-1}(g))^{-1}:=\psi_{i,p}^{-1}(g^{-1})
\end{equation}
and $1_p:=\psi_{i,p}^{-1}(e)$, where $e$ is the identity element on $G$.  Since $\psi_{i,p}$ is a diffeomorphism, the above product and inverse maps are smooth with respect to the manifold structure on $E_p$.  Hence, (\ref{SLGBAlternateE1}) defines a Lie group structure on $E_p$. 

To show that the  group structure on $E_p$ is well-defined, let $U_j$ be another open set such that $p\in U_j$.   Let 
\begin{equation}
\nonumber
\phi_{ji}: U_i\cap U_j\rightarrow \mbox{Aut}(G,\omega)
\end{equation}
be the transition function associated to $\psi_j\circ \psi_i^{-1}$. Then 
\begin{equation}
\nonumber
\psi_{i,p}^{-1}(g)=\psi_{j,p}^{-1}(\phi_{ji}(p)g),\hspace*{0.1in}\forall~g\in G.
\end{equation}
Let $\cdot_i$ an $\cdot_j$ denote the group products defined by $\psi_{i,p}$ and $\psi_{j,p}$ respectively.  Since $\phi_{ji}(p)\in \mbox{Aut}(G,\omega)$, we have
\begin{align}
\nonumber
\psi_{i,p}^{-1}(g)\cdot_i \psi_{i,p}^{-1}(h)&=\psi_{i,p}^{-1}(gh)\\
\nonumber
&=\psi_{j,p}^{-1}(\phi_{ji}(p)(gh))\\
\nonumber
&=\psi_{j,p}^{-1}([\phi_{ji}(p)(g)][\phi_{ji}(p)(h)])\\
\nonumber
&=\psi_{j,p}^{-1}(\phi_{ji}(p)(g))\cdot_j\psi_{j,p}^{-1}(\phi_{ji}(p)(h))
\end{align}
for $g,h\in G$.  This implies that the group product on $E_p$ is well-defined.   In a similar fashion, one can show that the identity element and the inverse map on $E_p$ are also well-defined.

Next, for $p\in U_i$, define $\omega^{(p)}:=\psi_{i,p}^\ast \omega$.  Then $\omega^{(p)}$ is a symplectic form on $E_p$.  Moreover, since $\psi_{i,p}$ is a Lie group isomorphism, Lemma \ref{LeftInvarianceLemma} implies that $\omega^{(p)}$ is also left-invariant.  From the definition of $\omega^{(p)}$, it follows that 
\begin{equation}
\nonumber
\psi_{i,p}: (E_p,\omega^{(p)})\stackrel{\sim}{\rightarrow} (G,\omega)
\end{equation}
is an isomorphism of symplectic Lie groups.  To see that the definition of $\omega^{(p)}$ is well-defined, let $U_j$ be another open set such that $p\in U_j$.  Then 
\begin{align}
\nonumber
(\psi_{j,p}\circ \psi_{i,p}^{-1})^\ast\omega=(\phi_{ji}(p))^\ast\omega=\omega
\end{align}
since $\phi_{ji}(p)\in \mbox{Aut}(G,\omega)$.  This implies that 
\begin{equation}
\nonumber
\psi_{j,p}^\ast\omega=\psi_{i,p}^\ast\omega,
\end{equation}
which proves that $\omega^{(p)}$ is well-defined.

Lastly, we construct a section $\widetilde{\omega}$ of $\wedge^2(\ker \pi_\ast)^\ast$ such that 
\begin{equation}
\label{SLGBAlternateE2}
\gamma_p^\ast\widetilde{\omega}=\omega^{(p)},\hspace*{0.1in}\forall~p\in M
\end{equation}
where $\gamma_p: T(E_p)\hookrightarrow \ker \pi_\ast$ is the inclusion.  To begin, for each $i$, equip the bundle 
\begin{equation}
\nonumber
\pi_{1,i}:U_i\times G\rightarrow U_i
\end{equation}
with the SLGB structure given by Lemma \ref{TrivialSLGB}.   The $t$-symplectic form on $U_i\times G$ is then $\tau_i^\ast (\pi_{2,i}^\ast\omega)$, where 
\begin{equation}
\nonumber
\pi_{2,i}: U_i\times G\rightarrow G,\hspace*{0.1in} \tau_i: \ker~(\pi_{1,i})_\ast\hookrightarrow T(U_i\times G)
\end{equation}
are the natural maps.  Let $\widetilde{\pi}_{i}$ denote the restriction of $\pi$ to $\pi^{-1}(U_i)$ and define 
\begin{equation}
\nonumber
\widetilde{\psi}_i:=(\psi_i)_\ast|_{\ker (\widetilde{\pi}_{i})_\ast}.
\end{equation}
Since $\pi_{1,i}\circ \psi_i=\widetilde{\pi}_{i}$, it follows that
\begin{equation}
\nonumber
\widetilde{\psi}_i: \ker (\widetilde{\pi}_{i})_\ast\stackrel{\sim}{\rightarrow} \ker~(\pi_{1,i})_\ast
\end{equation}
is a vector bundle isomorphism.   

Now define $\widetilde{\omega}_i:=(\widetilde{\psi}_i)^\ast[\tau_i^\ast (\pi_{2,i}^\ast\omega)]$.  Let $p\in U_i$, $x\in E_p$, and
\begin{equation}
\nonumber
u,v\in \ker(\widetilde{\pi}_{i})_{\ast,x}= \ker \pi_{\ast,x}=T_x (E_p).
\end{equation}
Then
\begin{align}
\nonumber
(\widetilde{\omega}_i)_x(u,v)&=[(\widetilde{\psi}_i)^\ast[\tau_i^\ast (\pi_{2,i}^\ast\omega)]]_x(u,v)\\
\nonumber
&=[\tau_i^\ast (\pi_{2,i}^\ast\omega)]_{\psi_i(x)}(\widetilde{\psi}_i(u),\widetilde{\psi}_i(v))\\
\nonumber
&=[ (\pi_{2,i}^\ast\omega)]_{\psi_i(x)}(\tau_i(\widetilde{\psi}_i(u)),\tau_i(\widetilde{\psi}_i(v)))\\
\nonumber
&=[ (\pi_{2,i}^\ast\omega)]_{\psi_i(x)}(\widetilde{\psi}_i(u),\widetilde{\psi}_i(v))\\
\nonumber
&=\omega_{\pi_{2,i}\circ \psi_i(x)}((\pi_{2,i})_\ast\circ \widetilde{\psi}_i(u),(\pi_{2,i})_\ast\circ \widetilde{\psi}_i(v))\\
\nonumber
&=\omega_{\psi_{i,p}(x)}((\psi_{i,p})_\ast(u),(\psi_{i,p})_\ast(v))\\
\nonumber
&=((\psi_{i,p})^\ast\omega)_x(u,v)\\
\label{SLGBAlternateE3}
&=\omega^{(p)}_x(u,v).
\end{align}
This proves that for all pairs $i,j$ such that $U_i\cap U_j\neq \emptyset$, we have
\begin{equation}
\nonumber
\widetilde{\omega}_i=\widetilde{\omega}_j~\hspace*{0.2in}\mbox{on}\hspace*{0.2in} \pi^{-1}(U_i)\cap \pi^{-1}(U_j)=\pi^{-1}(U_i\cap U_j).
\end{equation}
Hence, the $\widetilde{\omega}_i$'s glue together to form a global section $\widetilde{\omega}\in \Gamma(\wedge^2(\ker~\pi)^\ast)$.  Moreover, since $\gamma_p: T(E_p)\hookrightarrow \ker~\pi_\ast$ is just the inclusion and $\widetilde{\omega}|_{\pi^{-1}(U_i)}=\widetilde{\omega}_i$ for all $i$, (\ref{SLGBAlternateE3}) implies $\gamma_p^\ast\widetilde{\omega}=\omega^{(p)}$.  This completes the proof.
\end{proof}

\noindent We conclude the paper with the following corollary which provides a simple recipe for generating SLGBs:

\begin{corollary}
Let $(G,\omega)$ be a connected symplectic Lie group and let $\pi: P\rightarrow M$ be any principal $\mbox{Aut}(G,\omega)$-bundle.  Then the associated fiber bundle
\begin{equation}
\nonumber
E:=(P\times G)/\mbox{Aut}(G,\omega)\rightarrow M
\end{equation}
admits the structure of a SLGB, where $\mbox{Aut}(G,\omega)$ acts naturally on $G$ from the left. 
\end{corollary}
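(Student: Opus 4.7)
The plan is to reduce the statement to Theorem \ref{SLGBAlternate}. By that theorem, it suffices to exhibit a system of local trivializations of $E\to M$ with fiber $G$ whose transition functions take their values in the Lie group $\mbox{Aut}(G,\omega)$.

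First, I would start from a system of local trivializations of the principal bundle $\pi: P\to M$,
\[
\{\chi_i: \pi^{-1}(U_i)\stackrel{\sim}{\rightarrow} U_i\times \mbox{Aut}(G,\omega)\},
\]
with transition functions $\phi_{ji}: U_i\cap U_j\to \mbox{Aut}(G,\omega)$ acting by right multiplication on the fiber. Since $\mbox{Aut}(G,\omega)$ acts on $G$ from the left by (symplectic) Lie group automorphisms, one obtains the associated fiber bundle $E=(P\times G)/\mbox{Aut}(G,\omega)\to M$ in the usual way. The standard associated-bundle construction produces induced local trivializations
\[
\{\psi_i: q^{-1}(U_i)\stackrel{\sim}{\rightarrow} U_i\times G\},
\]
where $q: E\to M$ is the projection, determined by $\psi_i([\chi_i^{-1}(p,a),g]):=(p,ag)$ for $p\in U_i$, $a\in \mbox{Aut}(G,\omega)$, $g\in G$.

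Next, I would compute the transition functions of the $\psi_i$. A direct check using the definition and the equivariance relation $[p\cdot a,g]=[p,ag]$ shows that for $p\in U_i\cap U_j$ and $g\in G$,
\[
(\psi_j\circ \psi_i^{-1})(p,g)=(p,\phi_{ji}(p)(g)),
\]
so the transition functions of $\{\psi_i\}$ are precisely the $\phi_{ji}$, which by construction take their values in $\mbox{Aut}(G,\omega)$.

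Finally, Theorem \ref{SLGBAlternate} applies directly and endows $E\to M$ with an SLGB structure. The only mildly delicate point is making sure that the transition functions of $\{\psi_i\}$ really coincide with those of $\{\chi_i\}$ under the correct convention for the left action of $\mbox{Aut}(G,\omega)$ on $G$ versus its right action on $P$; but this is just an unwinding of the associated-bundle equivalence relation and is routine. No additional non-triviality check on $\widetilde{\omega}$ is required, since Theorem \ref{SLGBAlternate} produces it automatically from the transition functions.
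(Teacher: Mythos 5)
Your proposal is correct and follows essentially the same route as the paper: both reduce the claim to Theorem \ref{SLGBAlternate} by observing that the associated bundle inherits local trivializations whose transition functions are the $\mbox{Aut}(G,\omega)$-valued cocycle of the principal bundle. The only difference is that you spell out the standard associated-bundle trivializations and their transition functions explicitly, which the paper simply asserts.
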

\begin{proof}
From the definition of the associated fiber bundle, we see that $E\rightarrow M$ is a smooth fiber bundle with fiber $G$ which has a system of local trivializations whose transition functions all take their values in $\mbox{Aut}(G,\omega)$.  Theorem \ref{SLGBAlternate} now implies that $E\rightarrow M$ admits the structure of a SLGB.  
\end{proof}

\end{document}